\newtheorem{thm}{Theorem}%[section]
\newtheorem{lem}[thm]{Lemma}
\newtheorem{prop}[thm]{Proposition}
\theoremstyle{definition}
\newtheorem{dfn}[thm]{Definition}
\theoremstyle{remark}
\numberwithin{equation}{section}
\newcommand{\ignore}[1]{}
\newcommand{\abs}[1]{\left\vert#1\right\vert}
\newcommand{\set}[1]{\left\{#1\right\}}
\newcommand{\Set}[2]{ \left\{#1 \ \big| \ #2 \right\} }
\newcommand{\br}[1]{\left[#1\right]}
\newcommand{\sr}[1]{\left(#1\right)}
\newcommand{\R}{\mathbb{R}}
\newcommand{\eps}{\varepsilon}
\newcommand{\To}{\longrightarrow}
\DeclareMathOperator*{\E}{\mathbb{E}}
\renewcommand{\Pr}{}
\let\Pr\relax
\DeclareMathOperator*{\Pr}{\mathbb{P}}
\newcommand{\1}[1]{\mathbf{1}_{\set{ #1 } }}
\newcommand{\eqdef}{\stackrel{\mathrm{def}}{=}}
\def\squareforqed{\hbox{\rlap{$\sqcap$}$\sqcup$}}
\def\qed{\ifmmode\squareforqed\else{\unskip\nobreak\hfil
\penalty50\hskip1em\null\nobreak\hfil\squareforqed
\parfillskip=0pt\finalhyphendemerits=0\endgraf}\fi}
\newcommand{\G}{\Gamma}
\newcommand{\M}{\mathcal{M}}
\newcommand{\supp}{\mathrm{supp}}
\newcommand{\Ss}{\mathbb{S}}
\begin{document}

\title{When Do Random Subsets Decompose a Finite Group? }% \\
% \textsc{\small working draft - please do not distribute} }

\author{Ariel Yadin
\thanks{Department of Mathematics, The
Weizmann Institute of Science,  POB 26, Rehovot 76100, ISRAEL;
\texttt{ariel.yadin@weizmann.ac.il}} }

\date{ }

\maketitle

\begin{abstract}
Let $A,B$ be two random subsets of a finite group $G$.  We consider the event that the
products of elements from $A$ and $B$ span the whole group; i.e. $\set{ A B \cup B A = G }$.
The study of this event gives rise to a group invariant we call $\Theta(G)$.  $\Theta(G)$ is between
$1/2$ and $1$,
and is $1$ if and only if the group is abelian.  We show that a phase transition occurs as the size
of $A$ and $B$ passes $\sqrt{ \Theta(G) |G| \log |G|}$; i.e. for any $\eps>0$,
if the size of $A$ and $B$ is less than
$(1-\eps)\sqrt{ \Theta(G) |G| \log |G|}$, then with high probability $A B \cup B A \neq G$.  If
$A$ and $B$ are larger than $(1+\eps)\sqrt{ \Theta(G) |G| \log |G|}$ then $A B \cup B A = G$
with high probability.
\end{abstract}

\section{Introduction}

Let $G$ be a finite group.  Two subsets $A,B \subset G$ are said to be
a \emph{decomposition} for $G$ if $A B \cup B A = G$, where
$$ A B \eqdef \Set{ a b }{ a \in A, b \in B } . $$

In \cite{Gady}, Kozma and Lev proved that for any finite group $G$, there always
exists a decomposition $A,B$ for $G$, such that $\abs{A} \leq c \sqrt{|G|}$, $\abs{B} \leq c \sqrt{|G|}$
(where $c>0$ is some explicit constant, see \cite{Gady} for details).  We consider a similar
question, but where the sets $A,B$ are randomly chosen.

Let $G$ be a finite group of size $n \geq 3$.  Let $a_1, a_2, \ldots,
a_k, b_1, b_2, \ldots, b_k$ be $2k$ random elements (perhaps with repetitions) chosen
independently from $G$, and let $A = \set{ a_1 , \ldots, a_k}$ and
$B = \set{ b_1, \ldots, b_k}$.
We investigate the event that $A$ and $B$ are a decomposition for $G$. Denote the probability of
this event by
$$ P(G,k) = \Pr \br{ A B \cup B A = G } . $$
Since $P(G,k)$ is monotone in $k$, it seems natural to ask whether a phase transition
occurs, and if so, then what is the critical value.  It turns out that
there exists a group invariant $\Theta(G) \in (1/2, 1]$ (defined in Section
\ref{scn:  group invariant} below), such that the critical value exists
and is equal to $\sqrt{ \Theta(G) n \log n }$,
as stated in our main result:

\begin{thm} \label{thm:  main thm}
Let $G_n$ be a family of groups such that
$$ \lim_{n \to \infty} |G_n| = \infty . $$
For all $n$, let $C_n = \sqrt{ \Theta(G_n) |G_n| \log |G_n| }$.
Then for any $\eps > 0$,
$$ \lim_{n \to \infty} P(G_n, \lceil (1+\eps) C_n \rceil ) = 1 , $$
and
$$ \lim_{n \to \infty} P(G_n, \lfloor (1-\eps) C_n \rfloor ) = 0 . $$
\end{thm}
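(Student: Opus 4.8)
The plan is to treat the event $\{AB\cup BA = G\}$ as a coupon-collector-type problem and pin down the threshold via a second-moment argument on the number of "uncovered" group elements. For a fixed $g\in G$, the element $g$ fails to lie in $AB$ precisely when none of the $k^2$ pairs $(a_i,b_j)$ satisfies $a_ib_j = g$, and similarly for $BA$. The key observation is that, conditioning on the $a_i$'s, the event $g\notin AB$ depends on whether any $b_j$ lands in the set $A^{-1}g$, while $g\notin BA$ depends on whether any $b_j$ lands in $gA^{-1}$. The correlation between these two events, averaged over the random choice of $A$, is exactly what is encoded in the invariant $\Theta(G)$ — heuristically, $\Pr[g\notin AB\cup BA]\approx \bigl(1 - \tfrac{|A^{-1}g \cup gA^{-1}|}{n}\bigr)^k$, and the typical size of $|A^{-1}g\cup gA^{-1}|$ is governed by how much $A^{-1}g$ and $gA^{-1}$ overlap, which is the origin of the constant $\Theta(G)\in(1/2,1]$ (with $\Theta=1$ iff the group is abelian, since then $A^{-1}g = gA^{-1}$ and there is no gain from the union). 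I would first establish, using the definition of $\Theta(G_n)$ from Section~\ref{scn:  group invariant}, the estimate
\[
  \Pr[g \notin AB \cup BA] = n^{-(1+o(1)) k^2 / (\Theta(G_n) n)}
\]
uniformly in $g$, when $k$ is of order $\sqrt{n\log n}$; setting $k = (1\pm\eps)C_n$ makes this quantity $n^{-(1\pm\eps)^2(1+o(1))}$, i.e. much larger than $1/n$ in the subcritical case and much smaller than $1/n$ in the supercritical case.

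For the subcritical direction ($k = \lfloor(1-\eps)C_n\rfloor$, want $P\to 0$): let $X = \#\{g \in G : g \notin AB\cup BA\}$. By the estimate above, $\E[X] = n \cdot n^{-(1-\eps)^2(1+o(1))} = n^{\eps(2-\eps) - o(1)} \to \infty$. It then suffices to show $\Var(X) = o(\E[X]^2)$ so that $\Pr[X = 0]\to 0$ by Chebyshev. The variance computation requires controlling, for two distinct elements $g, h$, the probability that both are uncovered; the events are positively correlated but only mildly so, because $g \notin AB\cup BA$ is essentially determined by the random set $A^{-1}g \cup gA^{-1}$ avoiding the sample, and for most pairs $(g,h)$ the relevant sets behave near-independently. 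I expect the clean way to handle this is to condition on $A$, note that conditionally on $A$ the events $\{g\notin AB\cup BA\}$ over varying $g$ become a product-measure coverage problem in the $b_j$'s (for which pairwise correlations are easy), and then show that the outer average over $A$ contributes only a $1+o(1)$ factor. A routine truncation handles the rare $A$ for which $A^{-1}g\cup gA^{-1}$ is atypically small.

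For the supercritical direction ($k = \lceil(1+\eps)C_n\rceil$, want $P\to 1$): here $\E[X] = n^{-\eps(2+\eps)+o(1)}\to 0$, so Markov's inequality gives $\Pr[X \geq 1] \leq \E[X] \to 0$, i.e. $\Pr[AB\cup BA = G]\to 1$. This direction is essentially immediate from the first-moment estimate, modulo making the $o(1)$ in the exponent uniform over $g$ and over the family $G_n$ — in particular one must be careful that the definition of $\Theta(G_n)$ delivers the required concentration of $|A^{-1}g \cup gA^{-1}|$ around its mean with no bad dependence on $n$. The main obstacle, and the technical heart of the argument, is precisely this: showing that $\Theta(G)$ as defined really does capture the first-order asymptotics of $\Pr[g\notin AB\cup BA]$ with error terms that vanish uniformly, and that the second moment in the subcritical regime is controlled despite the group-theoretic lack of independence between the "left" set $A^{-1}g$ and the "right" set $gA^{-1}$. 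Everything else is first- and second-moment bookkeeping.
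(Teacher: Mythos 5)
Your overall architecture (count the uncovered elements $X=|S|$, first moment plus Markov above the threshold, second moment plus Chebyshev/Paley--Zygmund below it, and handle the dependence among the $k^2$ pair-events by conditioning on $A$ so that the $b_j$'s become an i.i.d.\ covering problem) is sound, and the conditioning-on-$A$ device is a legitimate substitute for the paper's use of Suen's correlation inequality. But the central estimate you propose to prove is false, and the error sits exactly where $\Theta$ enters. Conditionally on $A$, the probability that $g$ is uncovered is $(1-|A^{-1}g\cup gA^{-1}|/n)^k$, and the overlap $|A^{-1}g\cap gA^{-1}|$ is essentially $|A\cap C(g)|$ (since $a_i^{-1}g=ga_j^{-1}$ forces $a_j=g^{-1}a_ig$), so after averaging over $A$ one gets
$$ \Pr\left[\, g\notin AB\cup BA \,\right] \approx \exp\left( -\frac{2k^2}{n}\left(1-\frac{|C(g)|}{2n}\right)\right), $$
which depends strongly on $g$ through $|C(g)|$. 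There is no estimate of the form $n^{-(1+o(1))k^2/(\Theta n\log n)}$ \emph{uniformly in} $g$ for any non-abelian family: e.g.\ for $G_n=S_m$ one has $\Theta=\tfrac12+o(1)$, and at $k=\lceil(1+\eps)C_n\rceil$ the identity (with $|C(g)|=n$) is uncovered with probability about $n^{-(1+\eps)^2/2}$, which for small $\eps$ is \emph{much larger} than $1/n$; in the dihedral case ($\Theta\approx 2/3$) the rotations and the reflections have uncovered probabilities with different polynomial exponents. So your claim that the per-element probability is uniformly $\ll 1/n$ (supercritical) or $\gg 1/n$ (subcritical), and the ensuing computation $\E[X]=n\cdot n^{-(1\pm\eps)^2(1+o(1))}$, break down.

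The correct role of $\Theta$ is not as an ``average overlap'' making the per-element probability uniform; it is defined precisely so that the heterogeneous sum crosses $1$ at the threshold:
$$ \E[X] \approx \exp\left(-\frac{2k^2}{n}\right)\sum_{g\in G}\exp\left(\frac{k^2|C(g)|}{n^2}\right), $$
and $2\Theta\log n=\log\sum_g\exp(\Theta\log n\,|C(g)|/n)$ says exactly that this is $\approx 1$ at $k^2=\Theta n\log n$. To conclude $\E[X]\to 0$ for $k\geq(1+\eps)C_n$ (and $\E[X]\to\infty$ for $k\leq(1-\eps)C_n$) you additionally need the monotonicity of $f(\xi)=2\xi\log n-\log\sum_g\exp(\xi\log n\,|C(g)|/n)$, whose derivative is at least $\log n$, so that moving $k^2/(n\log n)$ by a factor $(1\pm\eps)^2$ away from $\Theta$ moves $\E[X]$ by a polynomial factor in $n$; this step has no analogue in your write-up because the false uniform estimate hides it. Your second-moment plan (conditioning on $A$, controlling the fluctuation of $|A^{-1}g\cup gA^{-1}|$, which contributes only $e^{O(k^3/n^2)}=1+o(1)$ to the exponent) is repairable and parallels the paper's two-point Suen bound, but the first-moment bookkeeping must be redone as a sum over $g$ weighted by centralizer sizes, not as $n$ times a single uniform probability.
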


The proof of Theorem \ref{thm:  main thm} follows from Lemmas \ref{lem:  first moment} and
\ref{lem:  second moment}. Actually, it can be seen from these lemmas that the window
of the transition is smaller than stated by Theorem \ref{thm:  main thm}.
Before we move to the proofs of these lemmas, we define the group invariant $\Theta(G)$, and
elaborate on some of its properties.

\section{A Group Invariant} \label{scn:  group invariant}

For group theory background see \cite{Rotman}.

Say we are interested in measuring how close a group is to being abelian.  It seems reasonable
to try and associate a number, say $\rho(G)$, to each group $G$, such that $\rho(G)$ has the following properties:
\begin{itemize}
\item $\rho(G) \in [0,1]$.
\item $\rho(G) = \rho(G')$, if $G$ and $G'$ are isomorphic as groups.
\item $\rho(G) =1$ if and only if $G$ is abelian.
\end{itemize}

Perhaps the first ``probabilistic'' quantity that comes to mind is the probability that two randomly chosen
elements commute.  If $a,b$ are two random independent uniformly chosen elements from a finite group $G$,
then
\begin{equation} \label{eqn:  random elmts commute}
\Pr \br{ ab = ba } = \sum_{x \in G} \frac{|C(x)|}{|G|^2} ,
\end{equation}
where $C(x) = \set{ g \in G \ : \ gx=xg }$ denotes the centralizer of $x$ in $G$.
If we view $G$ as acting on itself by conjugation, then $C(x)$ is the set of all elements
that fix $x$.  Also, the number of different orbits is just the number of conjugacy classes of $G$.
Thus, by Burnside's counting lemma (see \cite{Rotman} Chapter 3, page 58), $\Pr \br{ ab=ba } =
R(G)/|G|$, where $R(G)$ is the number of conjugacy classes in $G$.
(An alternative proof can be given through character theory,
using the Schur orthogonality relations.)

In this note, we define a different group invariant, $\Theta(G)$.
As it turns out, $\Theta(G) \in (1/2,1]$, and
$\Theta(G) = 1$ if and only if $G$ is abelian.  $\Theta(G)$ arises naturally when considering the question
that two random sets form a decomposition of a group $G$, as seen in Theorem \ref{thm:  main thm}.

We use the notation $C(x) = \Set{ g \in G}{ gx = xg }$ to denote the centralizer
of $x \in G$.  Note that $C(x)$ is a subgroup of $G$.

Let $G$ be a group of order $n$.
Since for any $x \in G$, $2 \leq \abs{C(x)} \leq n$, the function $f:[1/2,1] \to \R$
$$ f(\xi) = 2 \xi \log n - \log \sum_{x \in G} \exp \sr{
\xi \log n \cdot \frac{\abs{C(x)}}{n} } $$
is negative at $1/2$, non-negative at $1$, and continuous monotone increasing
on $[1/2,1]$.
Indeed,
$$ f(1/2) = \log n - \log  \sum_{x \in G} \exp \sr{
\frac{\log n}{2} \cdot \frac{\abs{C(x)}}{n} } \leq \log n - \log \sr{ n \cdot e^{\log n /n} } = - \frac{\log n}{n} . $$
$$ f(1) = 2 \log n - \log  \sum_{x \in G} \exp \sr{
\log n \cdot \frac{\abs{C(x)}}{n} } \geq 2 \log n - \log \sr{ n \cdot e^{\log n} } = 0 . $$
$$ f'(\xi) = 2 \log n - \frac{ \sum_{x \in G} \exp \sr{
\xi \log n \cdot \frac{\abs{C(x)}}{n} } \cdot \frac{\log n \abs{C(x)}}{n} }{
\sum_{x \in G} \exp \sr{ \xi \log n \cdot \frac{\abs{C(x)}}{n} } } \geq
2 \log n - \log n  > 0 . $$
Thus, the following is well defined:
\begin{dfn} \label{dfn:  Theta}
Let $G$ be a finite group of order $n$.
Define $\Theta = \Theta(G)$ to be the unique number in $[1/2,1]$ satisfying:
\begin{equation} \label{eqn:  dfn of Theta}
2 \Theta \log n = \log \sum_{x \in G} \exp \sr{ \Theta \log n \cdot \frac{\abs{C(x)}}{n} } .
\end{equation}
\end{dfn}

\paragraph{Remark.} $\Theta(G)$ is the solution of equation (\ref{eqn:  dfn of Theta})
%$$ f(\xi) = 2 \xi \log n - \log \sum_{x \in G} \exp \sr{
%\xi \log n \cdot \frac{\abs{C(x)}}{n} } . $$
If $x_1,x_2,\ldots,x_R$ are representatives of the conjugacy classes of $G$,
the sum in the logarithm of the right hand side of (\ref{eqn:  dfn of Theta}) can be written as
$$ \sum_{i=1}^R |[x_i]| \exp \sr{ \xi \log n \cdot \frac{1}{|[x_i]|} } , $$
where $[x_i]$ is the conjugacy class of $x_i$. This sum may remind
some readers of the ``zeta function'' studied by Liebeck and
Shalev, see e.g. \cite{LiebeckShalev, LiebeckShalev2}.  Their zeta
function is also used in the context of probabilistic group
theory.  We use the main result from \cite{LiebeckShalev2}
regarding this ``zeta function'' in Proposition \ref{prop: Theta}
below.

The following proposition provides some properties of $\Theta(G)$.  The proposition roughly shows that
$\Theta(G)$ measures, in some sense, how ``abelian'' a group is.  The properties of $\Theta$
are not essential to the proof of Theorem \ref{thm:  main thm}, and so some readers may
wish to skip to Section \ref{scn:  Suen's inequality}.

\begin{prop} \label{prop:  Theta}
Let $G$ be a group of order $n$.
\begin{enumerate}
    \item \label{enum:  i}
    Let $Z(G)$ be the center of $G$; i.e. $Z(G) = \Set{ g \in G}{ \forall \ x \in G \ : \
    gx = xg}$.  Then,
    $$ \Theta(G) \geq \frac{ \log \abs{Z(G)} }{ \log n} , $$
    and
    \begin{equation} \label{eqn:  upper bound Theta}
    \Theta(G) \leq \max \set{ \frac{2}{3} \sr{1 + \frac{\log 2}{\log n} }  ,
    \frac{\log \abs{Z(G)} + \log 2 }{\log n} }  . \nonumber
    \end{equation}
    \item $G$ is abelian if and only if $\Theta(G) = 1$ (so the lower bound in \ref{enum:  i} is tight).
    \item Let $R = R(G)$ be the number of conjugacy classes of $G$ (this is also the number of
    irreducible representations of $G$). Then,
    $$ \Theta(G) \geq \frac{1}{2- R/n} > 1/2 . $$
    \item Let $G = D_{2m}$, the dihedral group of order $n=2m$.  Then,
    $$ \frac{2}{3} \cdot \sr{1 - \frac{\log 2}{\log n} } \leq \Theta(D_{2m}) \leq
    \frac{2}{3} \cdot \sr{1 + \frac{\log 2}{\log n} } . $$
    (This implies that the upper bound in \ref{enum:  i} is tight.)
    %\item \label{enum:  limit}
%    Let $2/3 \leq \alpha < 1$.  Then, there exists a sequence of groups $\set{G_n}$,
%    such that
%    $$ \lim_{n \to \infty} \Theta(G_n) = \alpha . $$
    \item Let $G = S_m$, the group of all permutations on $m$ letters.  So $n=m!$.  Then,
    $$ \Theta(S_m) = \frac{1}{2} + o(1) . $$
    \item \label{enum:  limit}
    Let $1/2 \leq \alpha < 1$.  Then, there exists a sequence of groups $\set{G_n}$,
    such that
    $$ \lim_{n \to \infty} \Theta(G_n) = \alpha . $$
    \item Let $G$ be a simple non-abelian group. Then,
    $$ \Theta(G) = \frac{1}{2} + o(1) . $$
%    \item Let $H$ be an abelian group.  Then for any group $G$,
%    $$ \Theta(H \times G) \geq \Theta(G) $$
\end{enumerate}
\end{prop}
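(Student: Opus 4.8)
The plan is to prove the seven items of Proposition~\ref{prop: Theta} essentially one at a time, exploiting the defining equation (\ref{eqn: dfn of Theta}) and the elementary fact that the function $f(\xi)$ is increasing. The recurring mechanism is this: since $f$ is increasing and $f(\Theta) = 0$, to prove $\Theta \geq \alpha$ it suffices to show $f(\alpha) \leq 0$, i.e.\ $2\alpha\log n \leq \log\sum_{x\in G}\exp(\alpha\log n\cdot|C(x)|/n)$; and to prove $\Theta \leq \alpha$ it suffices to show $f(\alpha)\geq 0$, i.e.\ the reverse inequality. So each item reduces to estimating the exponential sum $\Sigma(\xi)\eqdef\sum_{x\in G}\exp(\xi\log n\cdot|C(x)|/n)$ from above or below at a well-chosen value of $\xi$.

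For the lower bounds in (i) and (iii), I would isolate the contribution of the center: every $z\in Z(G)$ has $C(z)=G$, contributing $|Z(G)|\cdot n^{\xi}$ to $\Sigma(\xi)$, which already gives $\Theta(G)\geq \log|Z(G)|/\log n$ after plugging $\xi = \log|Z(G)|/\log n$ and checking $\Sigma(\xi)\geq n^{2\xi}$. For (iii) I would instead use the bound $|C(x)|/n\geq 1/|[x]|$ together with convexity (Jensen) or the simple observation $\sum_x e^{t/|[x]|}\geq$ something in terms of $R$ and $n$; more precisely, summing over conjugacy-class representatives and using $\sum_i|[x_i]| = n$ with $R$ terms, a convexity argument should yield $\Sigma(\xi)\geq R\cdot\exp(\xi\log n\cdot n/(Rn)\cdot\ldots)$—the cleanest route is that the minimum of $\sum|[x_i]|e^{\xi\log n/|[x_i]|}$ subject to the constraints is controlled, giving $\Theta\geq 1/(2-R/n)$. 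For the upper bounds, the key is that $|C(x)|\leq n/2$ for every non-central $x$ (since a proper centralizer has index at least $2$), so $\Sigma(\xi)\leq |Z(G)|n^{\xi} + n\cdot n^{\xi/2}$; choosing $\xi$ to balance the two terms produces exactly the max in the displayed upper bound of (i). Item (ii) is then immediate: $\Theta=1$ forces $f(1)=0$, and the $f(1)$ computation in the excerpt shows equality holds only when $|C(x)|=n$ for all $x$, i.e.\ $G$ abelian (conversely $G$ abelian gives $\Sigma(1)=n\cdot n = n^2$, so $f(1)=0$).

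For the specific families: in (iv), $D_{2m}$ has center of size $1$ or $2$, so the upper bound in (i) gives the stated upper bound, and the matching lower bound comes from computing $\Sigma(\xi)$ directly—$D_{2m}$ has roughly $m$ reflections each with centralizer of size $2$ and the rotations with centralizers of size roughly $m$ or $2m$—and showing $f(\tfrac23(1-\tfrac{\log 2}{\log n}))\leq 0$. For (v) and (vii), this is where the real input from outside is needed: I would invoke the Liebeck--Shalev zeta-function estimate cited in the remark (from \cite{LiebeckShalev2}), which controls sums of the form $\sum_i |[x_i]|^{-s}$; for $S_m$ and for simple non-abelian $G$ one needs that the conjugacy classes are ``large'' (few small centralizers), so $\Sigma(\xi)$ is dominated by the $n\cdot n^{\xi/2}$-type term coming from many classes with tiny centralizer, forcing $\Theta\to 1/2$. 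Concretely, one shows $f(\tfrac12+\delta)\geq 0$ for every fixed $\delta>0$ once $n$ is large, using that the number of $x$ with $|C(x)|$ not close to its minimum is negligible on the exponential scale. Item (vi) is a construction: take $G_n$ to be a direct product of an abelian group with a fixed non-abelian piece, or better, products of dihedral-type groups tuned so that $\log|Z(G_n)|/\log|G_n|\to\alpha$; since $\Theta$ of a product is not simply additive I would instead look for groups where $|Z(G)| = n^{\alpha+o(1)}$ and all non-central centralizers have index $\geq 2$, so that (i) pins down $\Theta(G_n)\to\alpha$—for instance $G_n = H_n \times A_n$ with $H_n$ a fixed non-abelian group and $A_n$ abelian of size $n^{\alpha/(1-\ldots)}$, checking the center and centralizer sizes scale correctly.

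The main obstacle will be items (v) and (vii): everything else is a one- or two-line application of monotonicity of $f$ plus the $|C(x)|\le n/2$ observation, but the claim $\Theta = \tfrac12 + o(1)$ for $S_m$ and for simple groups genuinely requires quantitative control on the distribution of centralizer sizes, which is exactly what the Liebeck--Shalev results supply. The delicate point is that $\Sigma(\tfrac12\log n)$ sits right at the boundary ($f(1/2)<0$ only by $-\log n/n$), so to push $\Theta$ down to $\tfrac12+o(1)$ one must show that $\Sigma((\tfrac12+\delta)\log n)$ is at least $n^{1+2\delta}$, i.e.\ that enough conjugacy classes have $|C(x)|=n^{o(1)}$ (equivalently $|[x]|=n^{1-o(1)}$); for $S_m$ this follows from known class-size estimates, and for simple groups from \cite{LiebeckShalev2}. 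I would also need to double-check the secondary claim in (iv) that the lower bound forces the upper bound in (i) to be tight, and in (vi) verify the construction realizes the full interval $[1/2,1)$ including values near $1/2$, which may require combining the dihedral computation of (iv) with direct products.
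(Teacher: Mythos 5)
Your general mechanism (monotonicity of $f$ plus a sign check at a chosen $\xi$, isolating the center's contribution, the observation $|C(x)|\le n/2$ for non-central $x$, Jensen for item (iii), and the direct dihedral computation for (iv)) is exactly the paper's, and items (i)--(iv) go through as you describe. The genuine gap is item (vi). Your plan is to find groups with $|Z(G_n)|=n^{\alpha+o(1)}$ and let (i) ``pin down'' $\Theta$, but the upper bound in (i) is $\max\bigl\{\frac{2}{3}(1+\frac{\log 2}{\log n}),\,\frac{\log|Z(G)|+\log 2}{\log n}\bigr\}$, so it can never force $\Theta$ below $\frac{2}{3}$; this route cannot reach any $\alpha\in[1/2,2/3)$, and combining with dihedral products does not help: if the dihedral factor grows (as it must to keep $\log|Z|/\log n$ away from $1$), then about half the elements have $|C(x)|/n=\frac12$, and the computation in (iv) forces $\Theta\ge\frac23-o(1)$. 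Worse, your concrete example $G_n=H\times A_n$ with $H$ a \emph{fixed} non-abelian group and $A_n$ abelian and growing has $|Z(G_n)|=|Z(H)|\,|A_n|$, so $\log|Z(G_n)|/\log|G_n|\to 1$ and hence $\Theta(G_n)\to 1$, not $\alpha$. What is actually needed, and what the paper does with $G_m=C_{k_m}\times S_m$, $k_m=\lfloor (m!)^{\alpha/(1-\alpha)}\rfloor$, is a \emph{growing} non-abelian factor all of whose non-identity elements have $|C(x)|/|G|=o(1)$; then $n^{2\Theta}\le |Z|\,n^{\Theta}+n^{1+o(1)}$ gives $\Theta\le\max\{\alpha,\tfrac12\}+o(1)$, matching the lower bound from the center for every $\alpha\in[1/2,1)$.

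Two further problems in (v)/(vii). The Liebeck--Shalev zeta function of the cited paper is $\sum_{M}[G:M]^{-s}$ over \emph{maximal subgroups} of a \emph{simple} group, not a sum over conjugacy classes, and it does not apply to $S_m$: $S_m$ is not simple, and $\zeta_{S_m}(2)\ge [S_m:A_m]^{-2}=\tfrac14$, so the bound $|C(x)|/|G|\le\sqrt{\zeta_G(2)}=o(1)$ fails there. For (v) you must use your fallback, the elementary fact that every non-identity $\sigma\in S_m$ satisfies $|C(\sigma)|\le 2(m-2)!$, i.e.\ $|C(\sigma)|/m!=o(1)$, which is essentially the cycle-type case analysis the paper carries out. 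Also, your quantitative criterion has the inequality reversed: to get $\Theta\le\frac12+\delta$ you need $f(\frac12+\delta)\ge0$, i.e.\ $\sum_x\exp\bigl((\tfrac12+\delta)\log n\cdot|C(x)|/n\bigr)\le n^{1+2\delta}$, not ``at least''; small centralizers make this sum \emph{small}, which is precisely what pushes $\Theta$ down. With the zeta function described correctly, your argument for (vii) (non-trivial centralizers are proper, hence lie in maximal subgroups, hence have index tending to infinity) is the paper's and is fine.
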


\begin{proof}
Let $\Theta = \Theta(G)$.
\begin{enumerate}
\item For any $x \in Z(G)$, we have that $\abs{C(x)} = n$.  Thus,
$$ 2 \Theta \log n \geq \log (\abs{Z(G)} e^{ \Theta \log n }) = \log
\abs{Z(G)} + \Theta \log n . $$
This proves the lower bound.

Note that since $C(x)$ is a subgroup, $\abs{C(x)}$ must divide $|G|$.  Thus, for any
$x \not\in Z(G)$, we have that $\abs{C(x)} \leq n/2$.  Thus,
$$ n^{2 \Theta} \leq \abs{Z(G)} \cdot n^{\Theta} + (n-\abs{Z(G)}) \cdot n^{\Theta/2} \leq
n^{\Theta/2} \cdot 2 \max \set{ \abs{Z(G)} n^{\Theta/2} , n } . $$
This proves the upper bound.

\item Assume towards a contradiction
that $\Theta = 1$ and that $G$ is not abelian.  Then,
there exists $x \in G$ such that $\abs{C(x)} < n$.
Since $\abs{C(x)}$ must divide $n$, we get that
$\abs{C(x)} \leq n/2$.  Thus, by the definition of $\Theta$,
$$ n^2 \leq (n-1) n + n^{1/2} = n^2 - n + n^{1/2} , $$
a contradiction.

The other direction follows by \ref{enum:  i}, since if $G$ is abelian, $\abs{Z(G)} = n$.

\item By Burnside's Lemma, or by Schur's orthogonality relations, one can show that
$$ \sum_{x \in G} \abs{C(x)} = n \cdot R . $$
Using Jensen's inequality on the convex function $\exp \sr{ \frac{\Theta \log n}{n} \cdot \xi }$,
$$ 2 \Theta \log n = \log \sum_{x \in G} \exp \sr{ \frac{\Theta \log n}{n} \cdot \abs{C(x)} }
\geq \log n \exp \sr{ \frac{\Theta \log n}{n} \cdot R } = \log n +
\Theta \log n \cdot \frac{R}{n} . $$
The assertion follows.

\item \label{enum:  D_2m}
The dihedral group of order $n=2m$ is
$$ D_{2m} = \left< x,y \ : \ x^m = y^2 = 1 \ , \ yxy=x^{-1} \right> =
\set{ x^i , y x^i \ : \ i=0,1,\ldots,m-1 } . $$
One can check that the following holds:
$$ \begin{array}{lr}
i \not\in \set{ 0, m/2} & C(x^i) = \set{ 1,x,\ldots,x^{m-1} } , \\
i \in \set{0,m/2} & C(1) = C(x^{m/2}) = D_{2m} , \\
\textrm{ if } m \textrm{ is even } & C(y x^i) = \set{1, x^{m/2} , y x^i, y x^{i+m/2} } , \\
\textrm{ if } m \textrm{ is odd } & C(y x^i) = \set{1, y x^i} .
\end{array} $$
Thus, $Z(D_{2m}) = \set{1}$ if $m$ is odd, and $Z(D_{2m}) = \set{1, x^{m/2}}$, if $m$
is even.  So we get the upper bound by \ref{enum:  i}.

On the other hand, considering the elements $1,x,\ldots,x^{m-1}$, we have that
$$ n^{2 \Theta} \geq m \cdot n^{\Theta/2} = \frac{1}{2} n^{1+\Theta/2} , $$
which implies the lower bound.

%Thus,
%$$ n^{2 \Theta} \leq 2 n^{\Theta} + (m-2) n^{\Theta/2} + m n^{\Theta \frac{4}{n}} , $$
%which implies that $n^{\Theta} \leq 2 + n^{1 - \Theta/2}$.  Hence, $\Theta \leq \frac{2}{3} + o(1)$.
%On the other hand, taking into consideration only

%\item Fix $2/3 \leq \alpha < 1$.
%Let $m_n$ be a sequence of \emph{even} non-negative integers such
%that $\lim_{n \to \infty} m_n = \infty$.  For all $n$, let $k_n =
%\lceil \frac{1}{2} m_n^{\alpha/(1-\alpha)} \rceil$.  Then,
%\begin{equation} \label{eqn:  log k / log (km) geq 2/3}
%\frac{2}{3} \leq \alpha \leq \frac{\log (2 k_n)}{\log (2 k_n m_n)}
%\leq \alpha + m_n^{-\alpha/(1-\alpha)} \leq \alpha + m_n^{-2} .
%\end{equation}
%So, in light of the bounds in \ref{enum:  i}, we would like to
%find a sequence $\set{G_n}$, such that $\abs{G_n} = 2 k_n m_n$ and
%$\abs{Z(G_n)} = 2 k_n$.
%
%Let $G_n = C_{k_n} \times D_{2 m_n}$, where $C_{k_n}$ is the cyclic group of order $k_n$.
%So $|G_n| = 2 k_n m_n$.
%As can be seen in the proof of \ref{enum:  D_2m}, since $m_n$ is even,
%$|Z(D_{2 m_n})| = 2$. Since $Z(G_n) = C_{k_n} \times Z(D_{2 m_n})$, we get that
%$|Z(G_n)| = 2 k_n$.

\item We use the following notation:  If $c = (i_1,i_2, \ldots,i_s) \in S_m$ is a cycle, and
$\tau \in S_m$ is any permutation, then denote $c^{\tau} = (\tau(i_1), \tau(i_2), \ldots, \tau(i_s))$
(note that $c^{\tau} = \tau c \tau^{-1}$).  For a permutation $\sigma \in S_m$ denote by $\supp(\sigma) =
\set{ j \in [m] \ : \ \sigma(j) \neq j }$ the support of $\sigma$.  $\abs{\sigma} = \abs{\supp(\sigma)}$ denotes
the size of the support.

Let $\sigma \in S_m$, and write $\sigma = c_1 c_2 \cdots c_\ell$, where $c_i$ are cycles,
ordered by their size from largest to smallest (i.e. $\abs{c_i} \geq \abs{c_j}$ for all $i \leq j$).
Let $s = \abs{c_1}$ be the size of the largest cycle in the decomposition.
Let $r \geq 1$ be the index such that $\abs{c_i} = s$ for all $1 \leq i \leq r$, and $\abs{c_i} < s$
for all $i > r$.

Set
$$ S = \bigcup_{i=1}^r \supp(c_i) . $$

If $\tau \in C(\sigma)$, then $\tau \sigma \tau^{-1} = \sigma$.  But it can easily be seen that
$$ \tau \sigma \tau^{-1} = c_1^{\tau} c_2^{\tau} \cdots c_\ell^{\tau} . $$
Since $\abs{c_i} = \abs{c_i^{\tau}}$, we get that
for any $j \in S$ we must have that $\tau(j) \in S$.  Thus,
$$ \abs{ C(\sigma) } \leq \abs{ \Set{ \tau \in S_m }{ \tau(S) = S } } = \abs{S}! (m-\abs{S})! . $$
If $r < \ell$, then since $\abs{c_\ell} \geq 2$, we have that
$\abs{S} \leq m-2$.  Thus,
\begin{equation} \label{eqn:  2(m-2)!}
\abs{C(\sigma)} \leq m! \cdot \frac{2}{m(m-1)} < m! \cdot \sr{ \frac{e}{m} }^2  .
\end{equation}

Assume that $r=\ell$.  Then either $\sigma$ is a cycle of length $m$, or a cycle of length $m-1$,
or $\sigma$ is the product of cycles of equal length.

If $\sigma$ is the identity, then $\abs{C(\sigma)} = m!$.
If $\sigma$ is a cycle of length $m$ then $\abs{C(\sigma)} = m$.  If $\sigma$ is a
cycle of length $m-1$ then $\abs{C(\sigma)} = m-1$.

So we are left with the case where $\sigma = c_1 c_2 \cdots c_r$ and $\abs{c_i} = m/r$ for all
$1 \leq i \leq r$.  Note that in this case,
$$ C(\sigma) \subseteq \Set{ c_1' c_2' \cdots c_r' }{ \textrm{ all } c_i' \textrm{ are cycles of length } m/r } . $$
Thus,
\begin{equation} \label{eqn:  (e/m)^2}
\abs{C(\sigma)} = \frac{m!}{\sr{\frac{m}{r}}^r r!} < m! \cdot \sr{ \frac{e}{m} }^2 .
\end{equation}

Combining (\ref{eqn:  2(m-2)!}) and (\ref{eqn:  (e/m)^2}) we get (for $n=m!$),
\begin{eqnarray*}
n^{2 \Theta} & \leq & n^\Theta + (m-1)! n^{\Theta/(m-1)!} + m(m-2)! n^{\Theta/m(m-2)!}
\\
& & \ \ + \sr{m! - (m-1)! - m(m-2)!-1} \cdot n^{\Theta e^2/m^2 } \\
& \leq & (n-1) \cdot (1+o(1)) + n^{\Theta}
\end{eqnarray*}
which shows that $\Theta \leq \frac{1}{2} + o(1)$ (as $m$ tends to infinity).

\item Let $\alpha \in [1/2 , 1)$.  For all integers $m$, let $n_m = m!$ and
$k_m = \lfloor n_m^{\alpha/(1-\alpha)} \rfloor$.
So,
$$ n_m^{\alpha/(1-\alpha)} \cdot \sr{1- 1/n_m} \leq k_m \leq (k_m n_m)^{\alpha} . $$
Let $G_m = C_{k_m} \times S_m$, where $C_{k_m}$ is the cyclic group of order
$k_m$.  Note that for $c \in C_{k_m}$ and $\sigma \in S_m$ the centralizer
of $(c,\sigma)$ in $G_m$ is the set $C_{k_m} \times C(\sigma)$.  Thus, using
the calculations for $S_m$ in the previous proof, for $\Theta = \Theta(G_m)$,
\begin{eqnarray*}
|G_m|^{2 \Theta} & \leq & k_m \cdot \big( |G_m|^{\Theta} +
(m-1)! |G_m|^{\Theta/(m-1)!} + m(m-2)! |G_m|^{\Theta/m(m-2)!} \\
& & \ \
+ (n_m - 1 - (m-1)! - m(m-2)!) |G_m|^{\Theta e^2/m^2} \big) \\
& \leq & k_m |G_m|^{\Theta} + (1+o(1)) k_m n_m .
\end{eqnarray*}
Since $\alpha + \Theta > 1$, we get that
$ |G_m|^{2 \Theta} \leq (2+o(1)) (k_m n_m)^{\Theta + \alpha} ,$
which implies that $\Theta(G_m) \leq \alpha + o(1)$.

On the other hand $|G_m|^{2 \Theta} \geq k_m |G_m|^{\Theta} \geq (1-1/n_m) \cdot n_m^{\alpha/(1-\alpha)} |G_m|^{\Theta}$.
Hence %$$ \Theta \cdot \sr{\frac{\alpha}{1-\alpha} + 1} \geq \frac{\alpha}{1-\alpha} - o(1) ,$$ and so
$\Theta(G_m) \geq \alpha - o(1)$.

\item Let $G$ be a finite simple non-abelian group.
Let $\M$ be the set of all maximal subgroups of $G$.
Consider the following ``zeta function'' (defined in \cite{KanLub}, and studied
further in \cite{LiebeckShalev, LiebeckShalev2}):
$$ \zeta_G(s) = \sum_{M \in \M} [G:M]^{-s} . $$
Theorem 1.1 of \cite{LiebeckShalev2} states that for any $s>1$,
$$ \zeta_G(s) \To 0 \quad \textrm{ as } \quad |G| \to \infty . $$
Since $G$ is simple non-abelian, if $x$ is not the identity in $G$, then $C(x)$ is a proper subgroup.
Since any proper subgroup of $G$ is contained in a maximal subgroup, we get that
$$ \frac{|C(x)|}{|G|} = \sqrt{ [G:C(x)]^{-2} } \leq \sqrt{ \zeta_G(2) } . $$
So for all $x \neq 1$ in $G$ we get that $\frac{|C(x)|}{|G|} = o(1)$.  Plugging this into the
definition of $\Theta = \Theta(G)$ we get that
$$ n^{2 \Theta} \leq n^{\Theta} + (n-1) \cdot n^{\Theta o(1)} \leq n^{\Theta} + n^{1+o(1)} , $$
which implies that $\Theta \leq \frac{1}{2} + o(1)$. \qedhere
\end{enumerate}
\end{proof}

In the proof of Proposition \ref{prop:  Theta}, \ref{enum:  limit}, we use the product of a cyclic group
with the symmetric group to obtain different values of $\Theta$.  This idea raises the following

\paragraph{Open Problem.} Show that for any \emph{abelian} group $H$ and any group $G$,
$\Theta(H \times G) \geq \Theta(G)$.

\section{Suen's inequality} \label{scn:  Suen's inequality}

One of the main tools we use to prove our results is a correlation inequality by Suen
(see Theorem \ref{thm:  Suen} below).

\paragraph{Graph Notation.}
For a graph $\G = (V,E)$ write $v \sim u$ if $\set{v,u}$ is an
edge.  For subsets $S,T \subseteq V$, write $S \sim T$ if there
exists an edge between $S$ and $T$.  Thus, $S \not\sim T$ means
that there is no edge between $S$ and $T$.  $v \sim S$ means there
is an edge between $v$ and some element of $S$.

\begin{dfn} \label{dfn:  dependency graph}
Let $\set{X_i}_{i=1}^N$ be a collection of random variables.
A graph $\G = (V,E)$ is a \emph{dependency graph} of $\set{X_i}_{i=1}^N$ if:
$\G = (V,E)$ is an undirected graph on the vertex set $V = \set{1,\ldots,N}$
such that for any two disjoint subsets $S,T \subset V$, if $S
\not\sim T$ then the two families $\set{X_i}_{i \in S}$ and
$\set{X_i}_{i \in T}$ are independent of each other.  (In some
texts $\G$ is called a superdependency digraph.)
\end{dfn}

The following is a result of Suen, slightly improved by Janson
(see \cite{Janson(Suen), Suen}).

\begin{thm}[Suen's inequality] \label{thm:  Suen}
Let $X_1, \ldots, X_N$ be $N$ Bernoulli random variables, and let
$S_N = \sum_{i=1}^N X_i$.
Let $\G$ be a \emph{dependency graph} of $\set{X_i}_{i=1}^N$.

Define
$$ \Delta = \Delta \sr{\Gamma, \set{X_i}_{i=1}^N } = \frac{1}{2} \sum_{i=1}^N \sum_{j \sim i} \E \br{X_i
X_j} \prod_{k \sim \set{i,j}} \sr{ 1- \E \br{X_k} }^{-1} , $$
and
$$ \Delta^* = \Delta^* \sr{\Gamma, \set{X_i}_{i=1}^N } = \frac{1}{2} \sum_{i=1}^N \sum_{j \sim i} \E \br{X_i}
\E \br{X_j} \prod_{k \sim \set{i,j}} \sr{ 1- \E \br{X_k} }^{-1} .
$$ %

Then,
$$ \Pr \br{ S_N = 0 } \leq e^{\Delta} \prod_{i=1}^N \sr{ 1- \E \br{ X_i} } , $$
$$ \Pr \br{ S_N = 0 } \geq \sr{ 1 - \Delta^* e^{\Delta} } \prod_{i=1}^N \sr{ 1- \E \br{ X_i}
} . $$ %
\end{thm}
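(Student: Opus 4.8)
The statement to prove is Suen's inequality itself. This is a known result (attributed to Suen, improved by Janson), so the ``proof'' I would give is really a sketch of the standard argument; since the paper cites \cite{Janson(Suen), Suen} I would in fact expect the author to simply refer to those sources rather than reprove it. But if a self-contained proof were wanted, here is the plan.

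The plan is to work with the quantity $M = \prod_{i=1}^N (1 - X_i)$, whose expectation is exactly $\Pr[S_N = 0]$, and to compare $\E[M]$ with the ``independent'' product $\prod_{i=1}^N (1 - \E[X_i])$ by introducing the random variables one at a time along the dependency graph $\Gamma$. First I would set up notation: fix an ordering $1, \dots, N$ of the vertices, and for each $i$ let $N(i) = \{ j < i : j \sim i\}$ be the earlier neighbours. The key algebraic identity is the telescoping decomposition
$$
\prod_{i=1}^N (1 - X_i) - \prod_{i=1}^N (1 - \E[X_i]) = \sum_{i=1}^N \Big( \prod_{j < i} (1 - X_j) \Big) \big( \E[X_i] - X_i \big) \Big( \prod_{j > i} (1 - \E[X_j]) \Big),
$$
valid pointwise. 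Taking expectations, the term $i$ would vanish if $X_i$ were independent of $\{X_j : j < i\}$; the dependency-graph property says $X_i$ is independent of $\{ X_j : j < i, \ j \not\sim i\}$, so the deviation is controlled by the finitely many earlier neighbours in $N(i)$. The next step is to bound $\big|\E[ (\prod_{j<i}(1-X_j)) (X_i - \E[X_i]) ]\big|$: conditioning on the neighbours, one isolates a factor $\prod_{j \in N(i)} (1 - X_j)$ (each factor in $[0,1]$) times $|X_i - \E[X_i]| \le X_i + \E[X_i]$, and after taking expectations the cross terms produce exactly the summands $\E[X_i X_j]$ and $\E[X_i]\E[X_j]$ appearing in $\Delta$ and $\Delta^*$. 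The factors $\prod_{k \sim \{i,j\}} (1 - \E[X_k])^{-1}$ arise from re-inserting, for bookkeeping, the missing $(1 - \E[X_k])$ terms so that everything can be compared cleanly to the full product $\prod_i (1 - \E[X_i])$.

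The upper bound $\Pr[S_N = 0] \le e^{\Delta} \prod_i (1 - \E[X_i])$ then follows from an inductive/iterative argument: peeling off vertices one at a time, each step multiplies the target bound by a factor at most $\exp$ of the corresponding contribution to $\Delta$, and summing these exponents gives $e^\Delta$ overall. This is the part where one must be careful to use $1 + t \le e^t$ at the right places and to ensure the induction hypothesis is strong enough to close (the standard trick is to prove the bound simultaneously for all ``initial segments'' of the vertex set, or to run Janson's refinement of Suen's original martingale-type argument). The lower bound is obtained from the same telescoping identity: now one keeps the sign information, notes $\E[M] \ge \prod_i(1-\E[X_i]) - \sum_i |\cdots|$, and bounds the error sum by $\Delta^* e^\Delta \prod_i (1-\E[X_i])$ using the already-established upper-bound machinery to control the residual products.

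The main obstacle is the induction that upgrades the one-step estimate into the clean closed-form bounds with $e^\Delta$: the naive telescoping produces a product of local correction factors, and turning that product into $e^\Delta$ without losing constants requires precisely the dependency-graph independence to decouple the correction at vertex $i$ from the already-processed vertices, together with the reinsertion of the $(1-\E[X_k])^{-1}$ weights. Getting the combinatorics of ``which neighbours have already been processed'' to line up with the symmetric factor $\frac12 \sum_i \sum_{j \sim i}$ in the definitions of $\Delta, \Delta^*$ is the delicate bookkeeping step. Since this is exactly Janson's contribution in \cite{Janson(Suen)}, I would either reproduce that argument carefully or — as the paper does — simply cite it and move on to the applications in Lemmas \ref{lem: first moment} and \ref{lem: second moment}.
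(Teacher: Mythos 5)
The paper does not prove this theorem at all: it is an imported result, stated with citations to \cite{Janson(Suen), Suen}, which is exactly the treatment you anticipate, so your approach matches the paper's. Your supplementary sketch of the telescoping/induction argument is a reasonable outline of the standard proof, but as you yourself note it leaves the delicate step (the induction producing the $e^{\Delta}$ factor and the $\Delta^* e^{\Delta}$ error term) to the cited sources, which is fine here since the theorem is quoted, not reproved.
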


\section{Preliminaries}

Let $G$ be a finite group of size $n \geq 3$.  Let $a_1, a_2, \ldots,
a_k, b_1, b_2, \ldots, b_k$ be $2k$ random elements chosen
independently from $G$, and let $A = \set{ a_1 , \ldots, a_k}$ and
$B = \set{ b_1, \ldots, b_k}$.

We use the notation $[k] = \set{1,2,\ldots,k}$.

Let $V = [k] \times [k]$.  For $(i,j) \in V$, define
$$ I_{(i,j)} (x) = \1{ x = a_i \cdot b_j \ \textrm{ or } \ x = b_j \cdot a_i } . $$

\begin{dfn} \label{dfn:  the graph Gamma}
Define a graph $\G = (V,E)$ on the vertex set $V$, by the edge relation
$$ (i,j) \sim (\ell,m) \quad \Longleftrightarrow \quad
(i=\ell \textrm{ and  } j \neq m) \textrm{ or } (i \neq \ell \textrm{ and } j=m). $$
\end{dfn}

\begin{prop} \label{prop:  Gamma is dep. graph}
For any $x \in G$,
$\G$ is a dependency graph for $\set{ I_v(x)}_{v \in V}$.
\end{prop}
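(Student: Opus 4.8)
The plan is to unwind the edge relation of $\G$ and observe that the non-adjacency condition between two vertex sets translates directly into a disjointness condition on the underlying independent random elements $a_1,\dots,a_k,b_1,\dots,b_k$.

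First I would fix $x\in G$ and disjoint subsets $S,T\subseteq V$ with $S\not\sim T$. For a vertex $(i,j)$, the indicator $I_{(i,j)}(x)=\1{x=a_ib_j \text{ or } x=b_ja_i}$ is a deterministic function of the pair $(a_i,b_j)$ alone. Hence the family $\set{I_v(x)}_{v\in S}$ is a function of the collection $\Set{a_i}{i\in\pi_1(S)}\cup\Set{b_j}{j\in\pi_2(S)}$, where $\pi_1(S)$ and $\pi_2(S)$ denote the sets of first and of second coordinates occurring among the vertices of $S$; similarly for $T$.

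Next I would show that $S\not\sim T$ forces $\pi_1(S)\cap\pi_1(T)=\emptyset$ and $\pi_2(S)\cap\pi_2(T)=\emptyset$. Indeed, if some $i\in\pi_1(S)\cap\pi_1(T)$, pick $j,m$ with $(i,j)\in S$ and $(i,m)\in T$; since $S$ and $T$ are disjoint we have $j\neq m$, and then $(i,j)\sim(i,m)$ by Definition \ref{dfn:  the graph Gamma}, contradicting $S\not\sim T$. The argument for second coordinates is symmetric. (One can check that this disjointness is in fact equivalent to $S\not\sim T$, but only this direction is needed.)

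Finally, since $\pi_1(S)\cap\pi_1(T)=\emptyset$ and $\pi_2(S)\cap\pi_2(T)=\emptyset$, the families $\set{I_v(x)}_{v\in S}$ and $\set{I_v(x)}_{v\in T}$ are functions of two disjoint sub-collections of the independent random elements $a_1,\dots,a_k,b_1,\dots,b_k$, hence are independent of each other; this is precisely the requirement of Definition \ref{dfn:  dependency graph}. I do not expect a genuine obstacle here: the only point requiring care is the bookkeeping that turns ``$(i,j)$ and $(\ell,m)$ share exactly one coordinate'' into the coordinate-projection language, together with using the disjointness of $S$ and $T$ to exclude the shared-vertex case.
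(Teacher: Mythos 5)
Your proposal is correct and follows essentially the same route as the paper: reduce to the observation that $I_{(i,j)}(x)$ depends only on $(a_i,b_j)$, use $S\not\sim T$ together with $S\cap T=\emptyset$ to conclude that the vertices of $S$ and $T$ share no first or second coordinates, and then invoke independence of disjoint subcollections of the $a_i,b_j$. Your version merely makes the coordinate-projection bookkeeping explicit, which the paper compresses into one line.
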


\begin{proof}
Let $S,T$ be disjoint subsets of $V$ such that $S \not\sim T$.
Note that the values of $\set{I_v(x)}_{v \in S}$ are completely determined by
$\Set{ a_i , b_j }{ (i,j) \in S}$, and the values of
$\set{I_v(x)}_{v \in T}$ are completely determined by
$\Set{ a_i,b_j }{ (i,j) \in T}$.  Since $S \not\sim T$ and $S \cap T = \emptyset$, by definition,
for any $(i,j) \in S$ and $(\ell,m) \in T$, we have that $i \neq j$ and $j \neq m$.  Thus,
$\Set{ a_i , b_j }{ (i,j) \in S}$ and $\Set{ a_\ell , b_m }{ (\ell,m) \in T }$ are independent.
So, the families
$\set{I_v(x)}_{v \in S}$ and $\set{I_v(x)}_{v \in T}$ are independent.
\end{proof}

\begin{dfn} \label{dfn:  graph Gamma'}
Let $x \neq y \in G$. Define $V(x,y) = V \times \set{x,y}$.
Let $\Gamma_{x,y} = (V(x,y), E_{x,y})$ be the graph defined by the edge relations
$$ (v,z) \sim (u,z') \quad \Longleftrightarrow \quad \set{v, u} \in E , $$
for all $v,u \in V$ and $z,z' \in \set{x,y}$.
\end{dfn}

For $(v,z) \in V(x,y)$, define $J(v,z) = I_v(z)$.
The following is very similar to Proposition \ref{prop:  Gamma is dep. graph},
so we omit the proof.

\begin{prop}
For any $x \neq y \in G$,
$\G_{x,y}$ is a dependency graph for $\set{ I_v(x), I_v(y)}_{v \in V} = \set{J(v,z) }_{(v,z) \in V(x,y)}$.
\end{prop}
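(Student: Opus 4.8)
The plan is to run the two-layer analogue of the proof of Proposition \ref{prop:  Gamma is dep. graph}. The single new point to keep in mind is that, for $v=(i,j)$, both $J(v,x)=I_v(x)$ and $J(v,y)=I_v(y)$ are functions of the same pair $a_i,b_j$, so any dependency graph for the $J$'s must join $(v,x)$ to $(v,y)$; accordingly one reads the edge relation of $\G_{x,y}$ (Definition \ref{dfn:  graph Gamma'}) as: $(v,z)\sim(u,z')$ precisely when the coordinates of $v$ and $u$ overlap in $[k]\times[k]$, the case $v=u$ being the one just mentioned.

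Fix $x\neq y$ and let $S,T\subseteq V(x,y)$ be disjoint with $S\not\sim T$. Let $\pi:V(x,y)\to V$ be the projection forgetting the last coordinate, and for $W\subseteq V(x,y)$ put $W_1=\Set{i\in[k]}{(i,j)\in\pi(W)\text{ for some }j}$ and $W_2=\Set{j\in[k]}{(i,j)\in\pi(W)\text{ for some }i}$. The steps, in order, are: (1) as in Proposition \ref{prop:  Gamma is dep. graph}, observe $J((i,j),z)=\1{z=a_ib_j\text{ or }z=b_ja_i}$ is determined by $(a_i,b_j)$, so $\set{J(v,z)}_{(v,z)\in S}$ is a function of $\Set{a_i}{i\in S_1}\cup\Set{b_j}{j\in S_2}$ and $\set{J(v,z)}_{(v,z)\in T}$ of $\Set{a_i}{i\in T_1}\cup\Set{b_j}{j\in T_2}$; (2) show $S_1\cap T_1=\emptyset$ and $S_2\cap T_2=\emptyset$: if $i\in S_1\cap T_1$ there are $((i,j),z)\in S$ and $((i,j'),z')\in T$ sharing the first coordinate $i$, hence adjacent in $\G_{x,y}$ (whether or not $j=j'$), contradicting $S\not\sim T$; the second equality is identical with the roles of the two coordinates swapped; (3) conclude that the two blocks of group elements are disjoint subcollections of the independent uniform elements $a_1,\ldots,a_k,b_1,\ldots,b_k$, hence independent families, so $\set{J(v,z)}_{(v,z)\in S}$ and $\set{J(v,z)}_{(v,z)\in T}$ are independent, as required.

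I do not anticipate a real obstacle: (1) is the same localization as in Proposition \ref{prop:  Gamma is dep. graph}, (3) is the trivial independence of disjoint blocks of i.i.d.\ variables, and (2) is bookkeeping. The only thing needing care --- and the reason this is ``very similar to'' rather than literally a case of Proposition \ref{prop:  Gamma is dep. graph} --- is the two-layer structure: one must use that $(v,x)$ and $(v,y)$ are adjacent in $\G_{x,y}$, so $S$ and $T$ cannot split such a pair, which is what forces $S_1\cap T_1=\emptyset$ (and $S_2\cap T_2=\emptyset$).
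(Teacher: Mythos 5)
Your proof is correct, and it is the argument the paper intends when it omits the proof as ``very similar'' to that of Proposition \ref{prop:  Gamma is dep. graph}: localize each $J((i,j),z)$ to the pair $(a_i,b_j)$, show that $S \not\sim T$ forces the sets of $a$-indices and of $b$-indices used by $S$ and by $T$ to be disjoint, and conclude by independence of disjoint blocks of the $2k$ i.i.d.\ elements. Your opening remark about the edge relation is not cosmetic but necessary: read literally, Definition \ref{dfn:  graph Gamma'} requires $\set{v,u} \in E$, hence $v \neq u$, so $(v,x)$ and $(v,y)$ would be non-adjacent; yet these two indicators are functions of the same pair $(a_i,b_j)$ and are genuinely dependent (for $G$ abelian and $x \neq y$ they are mutually exclusive events of positive probability), so with the literal edge set the proposition would fail already for the singletons $S=\set{(v,x)}$, $T=\set{(v,y)}$. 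Your reading --- join $(v,z)$ and $(u,z')$ whenever $v$ and $u$ share a coordinate, in particular when $v=u$ and $z \neq z'$ --- is the one under which the statement is true, and your step (2) is exactly where those extra edges are used (the case $j=j'$). A side effect worth recording, though it does not affect the present proposition or your proof: with this corrected graph, the expression for $\Delta_J(x,y)$ in Lemma \ref{lem:  Delta for Iv} should in principle also include the diagonal terms $\E \br{I_v(x) I_v(y)}$ coming from the edges with $u=v$, which is where the difference between the two readings of the definition resurfaces.
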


The following Propositions prove to be useful in calculating the moments of $|A B \cup B A|$.

\begin{prop} \label{prop:  E[Iv]}
Let $x \in G$.
Let $v \in V$.  Then,
$$ \E \br{ I_v(x) } = \frac{2}{n}  \sr{ 1 - \frac{1}{2} \cdot \frac{\abs{C(x)}}{n} } . $$
\end{prop}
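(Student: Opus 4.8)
The statement to prove is Proposition~\ref{prop:  E[Iv]}: for $v=(i,j)\in V$ and $x\in G$,
$$ \E\br{I_v(x)} = \frac{2}{n}\sr{1 - \frac12\cdot\frac{\abs{C(x)}}{n}}. $$

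\textbf{Plan.} The plan is to compute this expectation directly by conditioning on the independent uniform choice of $a_i$ and $b_j$. By definition, $I_v(x) = \1{a_i b_j = x} + \1{b_j a_i = x} - \1{a_i b_j = x \text{ and } b_j a_i = x}$ (inclusion--exclusion for the ``or''), so that
$$ \E\br{I_v(x)} = \Pr[a_i b_j = x] + \Pr[b_j a_i = x] - \Pr[a_i b_j = x \text{ and } b_j a_i = x]. $$
First I would handle the two singleton events: since $a_i, b_j$ are independent and uniform on $G$, for any fixed $x$ the element $a_i b_j$ is uniform on $G$ (for each value of $a_i$ there is exactly one $b_j$ making the product equal $x$), so $\Pr[a_i b_j = x] = 1/n$, and likewise $\Pr[b_j a_i = x] = 1/n$. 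This gives the leading term $2/n$.

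\textbf{The main point} is the correction term $\Pr[a_i b_j = x \text{ and } b_j a_i = x]$. The key observation is that $a_i b_j = x$ and $b_j a_i = x$ together force $a_i b_j = b_j a_i$, i.e.\ $a_i$ and $b_j$ commute; conversely, if $a_i b_j = x$ and $a_i, b_j$ commute then automatically $b_j a_i = a_i b_j = x$. Hence the joint event equals $\set{a_i b_j = x} \cap \set{a_i b_j = b_j a_i}$. Conditioning on $b_j$: given $b_j$, the event $a_i b_j = x$ determines $a_i = x b_j^{-1}$ uniquely, and we then need this particular $a_i$ to commute with $b_j$, i.e.\ $x b_j^{-1} \in C(b_j)$. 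Equivalently I can sum over $a_i$: the joint event happens iff $a_i \in C(x)$ — let me verify which conditioning is cleanest. Actually the slick route: the joint event is $\set{a_i b_j = x,\ a_i \in C(b_j)}$; but it is easier to note that $a_i b_j = x$ with $a_i, b_j$ commuting is the same as $a_i b_j = x$ with $a_i \in C(a_i b_j) = C(x)$. So the joint event equals $\set{a_i b_j = x} \cap \set{a_i \in C(x)}$. Given $a_i$, the equation $a_i b_j = x$ pins down $b_j$ uniquely (probability $1/n$), so
$$ \Pr[a_i b_j = x,\ b_j a_i = x] = \sum_{g \in C(x)} \Pr[a_i = g]\cdot\frac1n = \frac{\abs{C(x)}}{n}\cdot\frac1n = \frac{\abs{C(x)}}{n^2}. $$
Subtracting, $\E\br{I_v(x)} = \frac2n - \frac{\abs{C(x)}}{n^2} = \frac2n\sr{1 - \frac12\cdot\frac{\abs{C(x)}}{n}}$, as claimed.

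\textbf{Expected obstacle.} There is no real difficulty here beyond being careful with the inclusion--exclusion and, in the correction term, correctly identifying that ``$a_ib_j=x$ and $b_ja_i=x$'' is equivalent to ``$a_ib_j=x$ and $a_i\in C(x)$'' (or equivalently $a_i\in C(b_j)$). The one thing to double-check is that $C(x) = C(a_ib_j)$ on the event $a_ib_j=x$, which is immediate, and that $a_i$ commutes with $b_j$ iff $a_i$ commutes with $a_ib_j$ — true since $a_i$ always commutes with itself. Everything else is a one-line uniformity argument.
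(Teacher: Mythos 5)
Your proof is correct and follows essentially the same route as the paper: inclusion--exclusion over the two events $\set{a_ib_j=x}$, $\set{b_ja_i=x}$, with the intersection reduced to a centralizer condition giving $\abs{C(x)}/n^2$ (the paper conditions on $b_j$, noting the joint event forces $b_j^{-1}\in C(x)$, while you condition on $a_i\in C(x)$ — a trivially symmetric variant).
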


\begin{proof}
Let $v = (i,j) \in V$.  Since $a_i$ and $b_j$ are independent,
by the inclusion-exclusion principle,
\begin{eqnarray*}
\E \br{ I_v(x) } & = & \Pr \br{ a_i = x b_j^{-1} } + \Pr \br{ a_i = b_j^{-1} x } -
\Pr \br{ a_i = x b_j^{-1} = b_j^{-1} x } \\
& = & \frac{1}{n} + \frac{1}{n} - \frac{1}{n} \Pr \br{ b_j^{-1} \in C(x) } .  \qquad \qedhere
\end{eqnarray*}
\end{proof}

\begin{prop} \label{prop:  E[Iv * Iu]}
Let $x,y \in G$.
Let $v \in V$ and let $u \sim v$.  Then,
$$ \E \br{ I_v(x) I_u(y) } = \frac{4}{n^2} \sr{ 1 - \frac{\abs{C(x)} +
\abs{C(y)}}{2n} + \frac{\abs{C(x) \cap C(y)}}{4n} } . $$
\end{prop}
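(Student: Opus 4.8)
The plan is to compute $\E[I_v(x)I_u(y)]$ directly by conditioning on which coordinate $v$ and $u$ share. Write $v=(i,j)$ and $u=(\ell,m)$ with $u\sim v$; by Definition \ref{dfn:  the graph Gamma} either $i=\ell$ and $j\neq m$, or $i\neq\ell$ and $j=m$. In the first case the three random elements $a_i,b_j,b_m$ are mutually independent and uniform; in the second case $a_i,a_\ell,b_j$ play the symmetric role. Either way $I_v(x)$ depends on exactly two of the three elements and $I_u(y)$ on the other two, sharing one common element, so the joint expectation is a sum over the four events $\{a_i b_j=x \text{ or } b_j a_i=x\}\cap\{a_i b_m=y\text{ or }b_m a_i=y\}$ (in the first case), which I would expand by inclusion–exclusion exactly as in Proposition \ref{prop:  E[Iv]}.

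Concretely, I would fix the shared variable — say $a_i$ in the $j\neq m$ case — and note that conditioned on $a_i$, the two indicators $I_v(x)$ and $I_u(y)$ become \emph{independent} functions of $b_j$ and $b_m$ respectively. By Proposition \ref{prop:  E[Iv]}'s computation (applied with $a_i$ fixed rather than random — the same inclusion–exclusion works since $b_j$ is uniform),
$$\E[I_v(x)\mid a_i] = \tfrac1n\bigl(\1{a_i\in\text{the right coset class}}+\cdots\bigr),$$
but it is cleaner to just write $I_v(x) = \1{b_j = a_i^{-1}x} + \1{b_j = x a_i^{-1}} - \1{b_j = a_i^{-1}x = xa_i^{-1}}$ and similarly for $I_u(y)$ in terms of $b_m$, then multiply out and take $\E$ over $b_j,b_m,a_i$ in that order. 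The cross terms collapse: the first factor contributes $\tfrac2n(1-\tfrac1{2n}\1{a_i\in C(x)})$ after averaging over $b_j$, the second contributes $\tfrac2n(1-\tfrac1{2n}\1{a_i\in C(y)})$ after averaging over $b_m$, and since these are conditionally independent given $a_i$ the product of conditional expectations averages over $a_i$ to
$$\frac{4}{n^2}\,\E_{a_i}\Bigl[\bigl(1-\tfrac1{2n}\1{a_i\in C(x)}\bigr)\bigl(1-\tfrac1{2n}\1{a_i\in C(y)}\bigr)\Bigr] = \frac{4}{n^2}\Bigl(1 - \frac{|C(x)|+|C(y)|}{2n} + \frac{|C(x)\cap C(y)|}{4n}\Bigr),$$
using $\E_{a_i}[\1{a_i\in C(x)}]=|C(x)|/n$ and $\E_{a_i}[\1{a_i\in C(x)}\1{a_i\in C(y)}]=|C(x)\cap C(y)|/n$. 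The $i\neq\ell$, $j=m$ case is identical with the roles of the $a$'s and $b$'s swapped, and one must also check the edge case where $v,u$ could be compared with $x=y$ or where the "or" in $I_v$ degenerates (e.g. $a_i\in C(x)$ forcing $a_i^{-1}x = xa_i^{-1}$) — but that degeneracy is precisely what the $\1{b_j=a_i^{-1}x=xa_i^{-1}}$ subtraction already accounts for, so no separate treatment is needed.

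The only mild subtlety — and the step I would be most careful about — is making sure the conditional independence claim is airtight: once $a_i$ is fixed, $I_v(x)$ is a deterministic function of $b_j$ alone and $I_u(y)$ of $b_m$ alone, and $b_j,b_m$ are independent, so $\E[I_v(x)I_u(y)\mid a_i] = \E[I_v(x)\mid a_i]\,\E[I_u(y)\mid a_i]$; then tower the expectation over $a_i$. There is no genuine obstacle here, just bookkeeping, so I would present it compactly by writing both indicators as $\pm$ sums of point-mass indicators in the free variable, expanding the nine resulting terms, and averaging — the computation is essentially two parallel copies of Proposition \ref{prop:  E[Iv]} glued along the shared variable.
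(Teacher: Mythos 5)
Your argument is correct and is essentially the paper's own proof: the paper likewise conditions on the shared element $a_i=g$, exploits the resulting conditional independence of the two indicators (one a function of $b_j$, the other of $b_\ell$), and splits according to whether $g$ lies in $C(x)$, $C(y)$, both, or neither. The only slip is notational: the conditional expectation is $\E\br{I_v(x)\mid a_i=g}=\frac{2}{n}\bigl(1-\frac{1}{2}\mathbf{1}_{\{g\in C(x)\}}\bigr)$, i.e.\ the factor inside the parentheses should be $\frac{1}{2}$ rather than $\frac{1}{2n}$; with that correction your averaging over $a_i$ yields exactly the stated formula.
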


\begin{proof}
Assume that $v = (i,j)$ and $u = (i,\ell)$ for $\ell \neq j$. Conditioning on $a_i = g$,
\begin{eqnarray} \label{eqn:  ai = xg or ai = gx}
\E \br{ I_v(x) I_u(y) } & = & \Pr \br{ \sr{ x = a_i \cdot b_j \ \textrm{ or } \ x = b_j \cdot a_i }
\ \textrm{ and } \ \sr{ y = a_i \cdot b_\ell \ \textrm{ or } \ y = b_\ell \cdot a_i } } \nonumber \\
& = & \frac{1}{n} \sum_{g \in G}  \Pr \br{ b_j = x g^{-1} \ \textrm{ or } \ b_j = g^{-1} x }
\Pr \br{ b_\ell = y g^{-1} \ \textrm{ or } \ b_\ell = g^{-1} y } . \nonumber
\end{eqnarray}
Considering the four cases:  $g^{-1} \in C(x) \cap C(y)$, $g^{-1} \in C(x) \setminus C(y)$,
$g^{-1} \in C(y) \setminus C(x)$, $g^{-1} \not\in C(x) \cup C(y)$, we get that
\begin{eqnarray*}
\E \br{ I_v(x) I_u(y) }  & = & \frac{1}{n^3} \cdot \sr{
 \abs{C(x) \cap C(y)} + 4(n- \abs{C(x) \cup C(y)}) + 2 \abs{C(x) \setminus C(y)} + 2 \abs{C(y) \setminus C(x)} }  \\
 & = & \frac{1}{n^3} \sr{ 4n - 2(\abs{C(x)} + \abs{C(y)} ) + \abs{C(x) \cap C(y)} } \\
& = & \frac{4}{n^2} \sr{ 1 - \frac{\abs{C(x)} + \abs{C(y)}}{2n} + \frac{\abs{C(x) \cap C(y)}}{4n} } .
\end{eqnarray*}
The case $u = (\ell,j)$ for $\ell \neq i$ is very similar (condition on $b_j = g$).
\end{proof}

\begin{prop} \label{prop:  degree of v and (v,u)}
Let $v \in V$ and let $u \sim v$.  Then,
$$ \abs{ \set{ w \in V \ : \ w \sim v } } = 2(k-1) , $$
$$ \abs{ \set{ w \in V \ : \ w \sim \set{v,u} } } = 3(k-1) +1 . $$
\end{prop}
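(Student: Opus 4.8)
The plan is a direct double-counting argument straight from the edge relation of Definition \ref{dfn:  the graph Gamma}. Recall that $(i,j)\sim(\ell,m)$ precisely when the two pairs agree in exactly one coordinate. So for a fixed $v=(i,j)$ the neighborhood splits as
$$ \set{w\in V : w \sim v} = \bigl(\set{i}\times([k]\setminus\set{j})\bigr) \cup \bigl(([k]\setminus\set{i})\times\set{j}\bigr), $$
and these two sets are disjoint, since every element of the first has first coordinate $i$ while every element of the second has first coordinate different from $i$. Each piece has $k-1$ elements, which gives the first claim $\abs{\set{w : w\sim v}} = 2(k-1)$.

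For the second claim, write $N(v)=\set{w\in V : w\sim v}$, so that $\set{w : w\sim\set{v,u}} = N(v)\cup N(u)$, and apply inclusion–exclusion: the count equals $\abs{N(v)}+\abs{N(u)}-\abs{N(v)\cap N(u)} = 4(k-1) - \abs{N(v)\cap N(u)}$. It then remains to show $\abs{N(v)\cap N(u)} = k-2$. Since $u\sim v$, and since the edge relation is symmetric under exchanging the two coordinates, we may assume $v=(i,j)$ and $u=(i,m)$ with $m\neq j$. Writing $N(v)$ and $N(u)$ in the two-piece form above, only the two ``same first coordinate'' pieces can intersect, and their intersection is $\set{i}\times([k]\setminus\set{j,m})$, of size exactly $k-2$; the two mixed cross terms are empty because they force a first coordinate to equal $i$ and to differ from $i$ simultaneously, and the two ``same second coordinate'' pieces are disjoint because they would force $j=m$. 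Hence the count is $4(k-1)-(k-2) = 3(k-1)+1$.

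The only place needing any care — and the nearest thing to an obstacle — is the bookkeeping for $N(v)\cap N(u)$: one must check all four cross-intersections of the two decompositions and verify that three of them are empty, and one must also treat the second type of edge ($u=(\ell,j)$ with $\ell\neq i$), which follows either by the coordinate-swap symmetry or by a verbatim repetition of the computation. Everything else is immediate from the definition of $\Gamma$.
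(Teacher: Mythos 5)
Your proof is correct and follows essentially the same route as the paper: the same disjoint two-piece decomposition of $N(v)$ for the first count, and the same inclusion–exclusion with $\abs{N(v)\cap N(u)}=k-2$ (the set $\set{(i,m) : m\neq j,\ m\neq\ell}$) for the second, yielding $4(k-1)-(k-2)=3(k-1)+1$. The extra bookkeeping you do on the empty cross-intersections is just a more explicit version of what the paper leaves implicit.
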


\begin{proof}
Assume that $v = (i,j)$.
The first assertion follows from
$$ \set{ w \in V \ : \ w \sim v } = \set{ (i,\ell) \ : \ \ell \neq j } \cup
\set{ (\ell,j) \ : \ \ell \neq i } , $$
since the above union is disjoint.

For the second assertion, assume that $u = (i,\ell)$ for $\ell \neq j$ (the proof
for $u = (\ell,j)$ for $\ell \neq i$ is very similar).
$$ \abs{ \set{ w \in V \ : \ w \sim \set{v,u} } } = \abs{ \set{ w \sim v } }
+ \abs{ \set{ w \sim u } } - \abs{ \set{ w \sim u \textrm{ and } w \sim v } } . $$
Since
$$ \set{ w \in V \ : \ w \sim u \textrm{ and } w \sim v } = \set{ (i,m) \ : \ m \neq j \textrm{ and }
m \neq \ell } , $$
we get that
$$ \abs{ \set{ w \in V \ : \ w \sim \set{v,u} } } = 4(k-1) - (k-2) = 3(k-1) +1 .
\qquad \qedhere $$
\end{proof}

\subsection{$\Delta$ and $\Delta^*$}

In order to apply Suen's inequality (Theorem \ref{thm:  Suen}), we need to calculate $\Delta$
and $\Delta^*$ as in Theorem \ref{thm:  Suen}, for the families of indicators $\set{I_v(x)}_{v \in V}$ and
$\set{J(v,z)}_{(v,z) \in V(x,y)}$.

\begin{lem} \label{lem:  Delta for Iv}
Let $x \neq y \in G$.
\begin{enumerate}
\item Let $\Delta_I(x) = \Delta(\G,\set{I_v(x)}_{v \in V})$ and
$\Delta_I^*(x) = \Delta^*(\G,\set{I_v(x)}_{v \in V})$ as in the statement of Theorem \ref{thm:  Suen}.
Then, $\Delta_I(x)$ and $\Delta_I^*(x)$ are both not larger than
$4 \cdot \frac{k^3}{n^2} \cdot \exp \sr{ \frac{6k}{n-2} }$.

\item Let $\Delta_J(x,y) = \Delta (\G_{x,y} , \set{J(v,z)}_{(v,z) \in V(x,y)})$
and $\Delta_J^*(x,y) = \Delta^* (\G_{x,y} , \set{J(v,z)}_{(v,z) \in V(x,y)})$.  Then,
$\Delta_J(x,y)$ and $\Delta_J^*(x,y)$ are both not larger than
$16 \cdot \frac{k^3}{n^2} \cdot \exp \sr{ \frac{12 k}{n-2} }$.
\end{enumerate}
\end{lem}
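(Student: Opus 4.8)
The plan is to plug the estimates from Section 4 directly into the definitions of $\Delta$ and $\Delta^*$ given in Theorem \ref{thm:  Suen}, and bound everything crudely. We already have three ingredients: Proposition \ref{prop:  E[Iv]} gives $\E[I_v(x)] \le 2/n$ (and more precisely $\E[I_v(x)] = \tfrac{2}{n}(1 - \tfrac12 \tfrac{|C(x)|}{n}) \ge \tfrac1n$, but the upper bound is what we need); Proposition \ref{prop:  E[Iv * Iu]} gives $\E[I_v(x) I_u(y)] \le 4/n^2$ for $u \sim v$ (since the bracketed factor is at most $1$); and Proposition \ref{prop:  degree of v and (v,u)} controls the combinatorics of $\G$: each vertex has degree $2(k-1)$ and each edge $\{v,u\}$ has $|\{w : w \sim \{v,u\}\}| = 3(k-1)+1$. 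For the first part I would write
$$ \Delta_I(x) = \frac12 \sum_{v} \sum_{u \sim v} \E[I_v(x) I_u(x)] \prod_{w \sim \{v,u\}} (1 - \E[I_w(x)])^{-1} . $$
There are $|V| = k^2$ vertices, each with $2(k-1) \le 2k$ neighbours, so the double sum has at most $2k^3$ terms; each summand has $\E[I_v I_u] \le 4/n^2$ and a product of at most $3(k-1)+1 \le 3k$ factors, each of which is $(1 - \E[I_w(x)])^{-1} \le (1 - 2/n)^{-1} = n/(n-2)$. Hence
$$ \Delta_I(x) \le \frac12 \cdot 2k^3 \cdot \frac{4}{n^2} \cdot \left( \frac{n}{n-2} \right)^{3k} = \frac{4 k^3}{n^2} \left( 1 + \frac{2}{n-2} \right)^{3k} \le \frac{4 k^3}{n^2} \exp\!\left( \frac{6k}{n-2} \right), $$
using $1 + t \le e^t$. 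The identical bound works for $\Delta_I^*(x)$, because $\E[I_v(x)]\E[I_u(x)] \le (2/n)^2 \le 4/n^2$, so the same term-by-term estimate applies verbatim.

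For the second part I would repeat the computation on the graph $\G_{x,y}$, whose vertex set $V(x,y) = V \times \{x,y\}$ has $2k^2$ vertices. By Definition \ref{dfn:  graph Gamma'}, $(v,z) \sim (u,z')$ iff $\{v,u\} \in E$, so the degree of $(v,z)$ in $\G_{x,y}$ is twice the degree of $v$ in $\G$, i.e. $2 \cdot 2(k-1) \le 4k$; likewise the neighbourhood of an edge $\{(v,z),(u,z')\}$ in $\G_{x,y}$ has size at most $2(3(k-1)+1) \le 6k$. The weights are the same: $\E[J(v,z) J(u,z')] = \E[I_v(z) I_u(z')] \le 4/n^2$ for adjacent pairs (this is exactly the content of Proposition \ref{prop:  E[Iv * Iu]}, which allows $x \ne y$), and each factor $(1 - \E[J(w,z'')])^{-1} \le n/(n-2)$. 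So the double sum over $\G_{x,y}$ has at most $2k^2 \cdot 4k = 8k^3$ terms, each bounded by $\tfrac{4}{n^2} (n/(n-2))^{6k}$, giving
$$ \Delta_J(x,y) \le \frac12 \cdot 8 k^3 \cdot \frac{4}{n^2} \cdot \left( \frac{n}{n-2} \right)^{6k} \le \frac{16 k^3}{n^2} \exp\!\left( \frac{12 k}{n-2} \right), $$
and the same for $\Delta_J^*(x,y)$.

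I do not expect any real obstacle here — the lemma is deliberately a loose book-keeping estimate, so the only thing to be careful about is consistently using the \emph{upper} bounds $\E[I_v] \le 2/n$ and $\E[I_v I_u] \le 4/n^2$ (dropping the negative correction terms in Propositions \ref{prop:  E[Iv]} and \ref{prop:  E[Iv * Iu]}), and correctly reading off the degree bounds $2(k-1) \le 2k$ and $3(k-1)+1 \le 3k$ from Proposition \ref{prop:  degree of v and (v,u)}, then doubling both for $\G_{x,y}$. One should note in passing that these bounds are vacuous unless $n > 2$, which is guaranteed by the standing assumption $n \ge 3$, so $(1 - 2/n)^{-1}$ is well-defined and positive.
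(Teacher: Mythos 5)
Your proof is correct and follows essentially the same route as the paper: bound each pairwise term by $4/n^2$ (Propositions \ref{prop:  E[Iv]} and \ref{prop:  E[Iv * Iu]}), bound the correction product by $(1-2/n)^{-3k} \leq \exp(6k/(n-2))$ via Proposition \ref{prop:  degree of v and (v,u)}, and count the terms. The only cosmetic difference is in part \emph{(ii)}, where you count vertices and degrees directly in $\G_{x,y}$ while the paper groups the inner sum over $z,z' \in \{x,y\}$ and bounds it by $16/n^2$; the arithmetic is identical.
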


\begin{proof}
By Propositions \ref{prop:  E[Iv]} and \ref{prop:  E[Iv * Iu]},
for any $v \sim u$, the quantities $\E \br{ I_v(x) I_u(x) }$ and $\E \br{ I_v(x)} \E
\br{ I_u(x)}$ are bounded by $\frac{4}{n^2}$.  By Proposition \ref{prop:  degree of v and (v,u)},
$$ \prod_{w \sim \set{v,u}} \sr{ 1- \E \br{I_w(x)} }^{-1} \leq
\sr{ 1- \frac{2}{n} }^{-3k} \leq \exp \sr{ \frac{6k}{n-2} } , $$
where we have used the inequality $(1-\frac{1}{\xi})^{-1} \leq \exp \sr{ \frac{1}{\xi-1} }$, valid for
any $\xi > 1$.

Plugging this into the definitions of $\Delta_I(x)$ and $\Delta_I^*(x)$ proves the first assertion.

Note that
$$ \Delta_J(x,y) = \frac{1}{2} \sum_{v \in V} \sum_{u \sim v} \sum_{z,z' \in \set{x,y} }
\E \br{ I_v(z) I_u(z') }  \prod_{w \sim
\set{v,u}} \sr{1- \E \br{ I_w(x) } }^{-1} \sr{1-\E \br{I_w(y)} }^{-1} , $$
and
$$ \Delta_J^*(x,y) = \frac{1}{2} \sum_{v \in V} \sum_{u \sim v} \sum_{z,z' \in \set{x,y} }
\E \br{ I_v(z) } \E \br{ I_u(z') }  \prod_{w \sim
\set{v,u}} \sr{1- \E \br{ I_w(x) } }^{-1} \sr{1-\E \br{I_w(y)} }^{-1} . $$
So, as above, the second assertion follows from
$$ \sum_{z,z' \in \set{x,y} } \E \br{ I_v(z) I_u(z') } \leq \frac{16}{n^2} \quad
\textrm{ and } \quad
\sum_{z,z' \in \set{x,y} } \E \br{ I_v(z) } \E \br{ I_u(z') } \leq \frac{16}{n^2} .
\qquad \qedhere $$
\end{proof}

% =====================================================================

\section{Bounds on $|A B \cup B A |$}

In this section we provide bounds on the probability of the event that
$\set{A B \cup B A = G}$, i.e. that $A$ and $B$ are a decomposition of $G$.
Let $S = G \setminus A B \cup B A$.  Thus, $A B \cup B A = G$ if and only if $\abs{S} = 0$.
To bound the required probabilities, we bound the first and second moments of $|S|$.

\begin{lem} \label{lem:  first moment}
Let $0 \leq \psi < \log n$, and let
$k \geq \sqrt{ \Theta(G) n (\log n + \psi) }$.  Then,
$$ \Pr \br{ A B \cup B A \neq G } \leq (1+o(1)) \cdot e^{ - \Theta(G) \psi } . $$
\end{lem}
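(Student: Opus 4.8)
The plan is to use a first-moment (union-bound-style) argument via Markov's inequality: since $A B \cup B A = G$ fails precisely when $|S| \geq 1$, we have $\Pr[AB\cup BA \neq G] = \Pr[|S|\geq 1] \leq \E|S| = \sum_{x\in G}\Pr[x \notin AB\cup BA]$. So the core task is to upper-bound $\Pr[x\notin AB\cup BA]$ for each fixed $x$, and then sum over $x$ and optimize using the defining equation \eqref{eqn:  dfn of Theta} of $\Theta(G)$.

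For a fixed $x \in G$, note that $x \in AB\cup BA$ iff $S_N := \sum_{v\in V} I_v(x) \geq 1$, where $N = k^2$. Thus $\Pr[x\notin AB\cup BA] = \Pr[S_N = 0]$, and I would apply the upper bound in Suen's inequality (Theorem \ref{thm:  Suen}) with the dependency graph $\Gamma$ (Proposition \ref{prop:  Gamma is dep. graph}): $\Pr[S_N=0] \leq e^{\Delta_I(x)}\prod_{v\in V}(1-\E[I_v(x)])$. By Proposition \ref{prop:  E[Iv]}, $\E[I_v(x)] = \frac{2}{n}(1-\frac{1}{2}\frac{|C(x)|}{n})$, so $\prod_{v}(1-\E[I_v(x)]) \leq \exp(-k^2\E[I_v(x)]) = \exp\!\big(-\frac{2k^2}{n}(1-\frac{|C(x)|}{2n})\big) = \exp\!\big(-\frac{k^2}{n}(2 - \frac{|C(x)|}{n})\big)$. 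Meanwhile Lemma \ref{lem:  Delta for Iv} gives $\Delta_I(x) \leq 4\frac{k^3}{n^2}\exp(\frac{6k}{n-2})$; since $k$ is of order $\sqrt{n\log n}$, this is $O\!\big(\frac{(\log n)^{3/2}}{\sqrt n}\big) = o(1)$ uniformly in $x$, so $e^{\Delta_I(x)} = 1 + o(1)$. Hence $\Pr[x\notin AB\cup BA] \leq (1+o(1))\exp\!\big(-\frac{k^2}{n}(2-\frac{|C(x)|}{n})\big)$.

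Summing, $\E|S| \leq (1+o(1))\sum_{x\in G}\exp\!\big(-\frac{k^2}{n}(2-\frac{|C(x)|}{n})\big) = (1+o(1))\exp(-2\tfrac{k^2}{n})\sum_{x\in G}\exp\!\big(\frac{k^2}{n}\cdot\frac{|C(x)|}{n}\big)$. Now substitute the hypothesis $k^2 \geq \Theta n(\log n + \psi)$, i.e. $\frac{k^2}{n} = \Theta(\log n + \psi)$ (taking the extremal case; monotonicity in $k^2$ of the exponent $-(2-\frac{|C(x)|}{n})$, which is positive since $|C(x)|\leq n$, lets me reduce to equality — here I use $0\leq\psi<\log n$ to keep $\frac{k^2}{n} = \Theta(\log n+\psi) \le 2\Theta\log n$ so that the bound $\Delta_I(x)=o(1)$ remains valid). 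Then the sum becomes $\exp(-2\Theta(\log n+\psi))\sum_{x}\exp\!\big(\Theta(\log n+\psi)\frac{|C(x)|}{n}\big)$. Write $\xi\log n := \Theta(\log n + \psi)$; I want to compare $\sum_x \exp(\xi\log n\cdot\frac{|C(x)|}{n})$ with its value at $\xi = \Theta$. Since $\xi \geq \Theta$ and the map $\xi\mapsto \xi$-scaled log-sum-exp has the convexity structure used to define $f$, I would instead argue directly: by the definition of $\Theta$, $\sum_x\exp(\Theta\log n\cdot\frac{|C(x)|}{n}) = n^{2\Theta}$, and the extra factor $\exp(\Theta\psi\cdot\frac{|C(x)|}{n})$ is at most $e^{\Theta\psi}$ per term since $|C(x)|\leq n$. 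Therefore $\sum_x\exp(\xi\log n\cdot\frac{|C(x)|}{n}) \leq e^{\Theta\psi}\cdot n^{2\Theta}$, giving $\E|S| \leq (1+o(1))\,n^{-2\Theta}e^{-2\Theta\psi}\cdot e^{\Theta\psi}n^{2\Theta} = (1+o(1))e^{-\Theta\psi}$, as claimed.

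The main obstacle is the bookkeeping in the last step: handling the inequality $k \geq \sqrt{\Theta n(\log n+\psi)}$ rather than equality (monotonicity argument), and correctly bounding the perturbed sum $\sum_x\exp((\Theta\log n + \Theta\psi)\frac{|C(x)|}{n})$ against $n^{2\Theta}$ using the crude bound $|C(x)|\le n$ — one must check this crude bound actually suffices and does not lose more than the allowed $e^{\Theta\psi}$ factor, which it does exactly. A secondary point requiring care is verifying that $\Delta_I(x) = o(1)$ uniformly over all $x\in G$ under the stated range of $k$; this follows cleanly from Lemma \ref{lem:  Delta for Iv} since $k = O(\sqrt{n\log n})$ makes $\frac{k^3}{n^2} = o(1)$ and $\frac{6k}{n-2} = o(1)$.
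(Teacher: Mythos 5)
Your proposal follows the paper's proof essentially step for step: Markov's inequality applied to $\E|S|$, Suen's upper bound (Theorem \ref{thm:  Suen}) with the dependency graph $\G$ and the expectation from Proposition \ref{prop:  E[Iv]}, and then the defining equation (\ref{eqn:  dfn of Theta}) of $\Theta$ together with the crude bound $|C(x)|\le n$ to absorb the $\psi$-part into a single factor $e^{\Theta\psi}$. All of that matches the paper and is correct.

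The one genuine flaw is how you dispose of the inequality $k \ge \sqrt{\Theta n(\log n+\psi)}$. The lemma imposes no upper bound on $k$, so your assertion that ``$k$ is of order $\sqrt{n\log n}$'' --- which is what you use to conclude $\Delta_I(x)=o(1)$ and hence $e^{\Delta_I(x)}=1+o(1)$ --- is not a consequence of the hypotheses. Monotonicity of the exponent $-\frac{k^2}{n}\bigl(2-\frac{|C(x)|}{n}\bigr)$ only lets you decrease $k$ inside the product term of Suen's bound; it does nothing for the factor $e^{\Delta_I(x)}$, which grows with $k$, and indeed for $k$ of order $n$ the bound of Lemma \ref{lem:  Delta for Iv} is exponentially large and your estimate becomes vacuous. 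The correct reduction, which the paper makes in one sentence, is probabilistic rather than analytic: the event $\set{AB\cup BA=G}$ is monotone non-decreasing in $k$ (couple the samples so that increasing $k$ only adds elements to $A$ and $B$), so one may assume at the outset that $k=\lceil\sqrt{\Theta n(\log n+\psi)}\rceil$. Only after this reduction is $k=O(\sqrt{n\log n})$ (here the assumption $\psi<\log n$ is used), so that $\Delta_I(x)=o(1)$ uniformly in $x$; with that fix, the rest of your computation goes through verbatim.
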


\begin{proof}
Since, by Markov's inequality,
$$ \Pr \br{ A B \cup B A \neq G } = \Pr \br{ \abs{S} \geq 1} \leq
\E \br{ \abs{S} } , $$
it suffices to bound $\E \br{ \abs{S} }$.

Note that the event $\Pr \br{ A B \cup B A = G }$ is monotone non-decreasing with $k$, so we
can assume that $k = \lceil \sqrt{ \Theta n (\log n + \psi) } \rceil$, where $\Theta= \Theta(G)$.

Now, $x \in S$ if and only if $\sum_{v \in V} I_v(x) = 0$.
By Lemma \ref{lem:  Delta for Iv},
$$ \Delta_I = \Delta_I(x) = O \sr{ k^3/n^2 } = o(1) . $$
Thus, using Suen's inequality (Theorem \ref{thm:  Suen}),
for any $x \in G$,
\begin{eqnarray*}
\Pr \br{ x \in S} & \leq & e^{\Delta_I} \cdot \sr{1-
\frac{2}{n}  \sr{ 1 - \frac{1}{2} \cdot \frac{\abs{C(x)}}{n} } }^{\abs{V}} \\
& \leq & (1+o(1)) \cdot \exp \sr{ - \frac{2 k^2}{n} + \frac{k^2 \abs{C(x)}}{n^2} }  .
\end{eqnarray*}
Summing over all $x \in G$, we get
\begin{eqnarray*}
\E \br{ \abs{S} } & \leq & (1+o(1)) \exp \sr{ - \frac{2 k^2}{n} } \cdot \sum_{x \in G}
\exp \sr{ \frac{k^2 \abs{C(x)} }{n^2} } \\
& \leq & (1+o(1) \cdot \exp \sr{ - 2 \Theta \psi - 2 \Theta \log n } \sum_{x \in G} \exp \sr{ \Theta \log n \frac{\abs{C(x)}}{n} }
\exp \sr{ \Theta \psi \frac{\abs{C(x)}}{n} } \\
& \leq & (1+o(1)) \cdot \exp \sr{ - \Theta \psi } .
\end{eqnarray*}
\end{proof}

\begin{lem} \label{lem:  second moment}
Let $0 \leq \psi < \log n$, and let
$k \leq \sqrt{ \Theta(G) n (\log n - \psi) }$.  Then,
% $$ \Pr \br{ A B \cup B A \neq G } \geq (1-o(1)) \cdot \sr{1 - e^{- \Theta(G) \psi } } - o(1) . $$
$$ \Pr \br{ A B \cup B A = G } \leq e^{- \Theta(G) \psi } + o(1) . $$
\end{lem}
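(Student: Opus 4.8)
The plan is to bound $\Pr[AB\cup BA=G]=\Pr[|S|=0]$ from above by using the second moment method ``in reverse''. Recall that for any nonnegative random variable, $\Pr[|S|=0]\le \Pr[\,|\,|S|-\E|S|\,|\ge \E|S|\,]\le \Var(|S|)/(\E|S|)^2$ by Chebyshev, so it suffices to show that $\E|S|$ is large (at least polynomially large, in fact of order $n^{\Theta\psi/\log n}$ up to constants) and that $\Var(|S|)$ is not much bigger than $(\E|S|)^2$, i.e. $\E[|S|^2]\le (1+o(1))(\E|S|)^2$. So the proof naturally splits into a lower bound on the first moment and an upper bound on the second moment.

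For the first moment, I would run the argument of Lemma~\ref{lem:  first moment} but using the \emph{lower} bound in Suen's inequality (Theorem~\ref{thm:  Suen}): for each $x\in G$,
\[
\Pr[x\in S]\ \ge\ (1-\Delta_I^*(x)e^{\Delta_I(x)})\prod_{v\in V}\bigl(1-\E[I_v(x)]\bigr).
\]
By Lemma~\ref{lem:  Delta for Iv}, $\Delta_I(x)=O(k^3/n^2)=o(1)$ and $\Delta_I^*(x)=o(1)$ in this regime (since $k=O(\sqrt{n\log n})$), so the prefactor is $1-o(1)$, and $\prod_v(1-\E[I_v(x)])=(1-\tfrac2n(1-\tfrac12\tfrac{|C(x)|}{n}))^{k^2}\ge(1-o(1))\exp(-\tfrac{2k^2}{n}+\tfrac{k^2|C(x)|}{n^2}-O(k^4/n^3))$. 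Summing over $x$ and plugging in $k^2=\Theta n(\log n-\psi)$ together with the defining equation (\ref{eqn:  dfn of Theta}) for $\Theta$ gives $\E|S|\ge (1-o(1))e^{\Theta\psi}$, mirroring the computation at the end of the previous proof.

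For the second moment I would write $\E[|S|^2]=\sum_{x,y\in G}\Pr[x\in S,\,y\in S]$, split off the diagonal terms $\sum_x\Pr[x\in S]=\E|S|$ (which is $o((\E|S|)^2)$ as long as $\E|S|\to\infty$; the borderline case $\psi$ bounded is handled by noting the claimed bound is trivial unless $e^{-\Theta\psi}=o(1)$), and for $x\ne y$ apply Suen's inequality to the family $\{J(v,z)\}_{(v,z)\in V(x,y)}$ with its dependency graph $\G_{x,y}$, using the bounds $\Delta_J(x,y),\Delta_J^*(x,y)=O(k^3/n^2)=o(1)$ from Lemma~\ref{lem:  Delta for Iv}. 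This yields
\[
\Pr[x\in S,\,y\in S]\le e^{\Delta_J(x,y)}\prod_{(v,z)}\bigl(1-\E[I_v(z)]\bigr)=(1+o(1))\prod_{v}(1-\E[I_v(x)])\prod_v(1-\E[I_v(y)]),
\]
since the product over $V(x,y)$ factors over the two copies. The right-hand side is $(1+o(1))\Pr[x\in S]\Pr[y\in S]/((1-o(1))^2)$, so summing over all ordered pairs $x\ne y$ gives $\E[|S|^2]\le(1+o(1))(\E|S|)^2+\E|S|$. Combining with the first-moment lower bound and Chebyshev yields $\Pr[|S|=0]\le (1+o(1))/\E|S| + o(1)\le e^{-\Theta\psi}+o(1)$.

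The main obstacle I anticipate is making the factorization in the second moment genuinely clean: one must check that the dependency graph $\G_{x,y}$ really does decouple the $x$-indicators from the $y$-indicators at the level of the products $\prod_{w\sim\{v,u\}}(1-\E[\cdot])^{-1}$ appearing inside $\Delta_J,\Delta_J^*$, and — more importantly — that the $e^{\Delta_J}$ factor in the upper Suen bound does not introduce correlations; since $\Delta_J(x,y)=o(1)$ uniformly in $x,y$ this only costs a $(1+o(1))$ factor, but one should be careful that there is no hidden dependence of $I_v(x)$ and $I_v(y)$ on the \emph{same} pair $(a_i,b_j)$ that would violate the independence needed to turn $\prod_{(v,z)\in V(x,y)}(1-\E[I_v(z)])$ into $\prod_v(1-\E[I_v(x)])\cdot\prod_v(1-\E[I_v(y)])$ — in fact it does not, because $\E[I_v(z)]$ depends only on $|C(z)|$ and the product is just a deterministic rearrangement. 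A secondary technical point is the uniformity of all the $o(1)$ and $(1+o(1))$ error terms over the pairs $x,y$, which is fine because every estimate in Lemma~\ref{lem:  Delta for Iv} and Propositions~\ref{prop:  E[Iv]}--\ref{prop:  E[Iv * Iu]} is uniform in the group elements.
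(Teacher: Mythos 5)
Your proposal is correct and follows essentially the same route as the paper: Suen's lower bound for $\Pr[x\in S]$, Suen's upper bound for $\Pr[x,y\in S]$ via $\G_{x,y}$, the factorization $\prod_{(v,z)\in V(x,y)}(1-\E[J(v,z)])=p_xp_y$, and the key estimate $\sum_x p_x\ge(1-o(1))e^{\Theta\psi}$ obtained from the defining equation for $\Theta$ with $k^2\le\Theta n(\log n-\psi)$. The only difference is cosmetic — you finish with Chebyshev's inequality $\Pr[|S|=0]\le\Var(|S|)/(\E|S|)^2$ while the paper uses the Paley--Zygmund bound $\Pr[|S|>0]\ge(\E|S|)^2/\E[|S|^2]$ — and your aside about the ``borderline case'' of bounded $\psi$ is unnecessary (and slightly off), since your final chain keeps the $1/\E|S|\le(1+o(1))e^{-\Theta\psi}$ term and never needs $\E|S|\to\infty$.
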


\begin{proof}
As in the proof of Lemma \ref{lem:  first moment}, we can assume that
$k = \lfloor \sqrt{ \Theta n (\log n - \psi) } \rfloor$, for $\Theta = \Theta(G)$.

We can bound the moments of $|S|$ using Suen's inequality, as in the proof of
Lemma \ref{lem:  first moment}.
To simplify the notation we will use $p_x = (1- \E \br{I_v(x)})^{k^2}$
(which does not depend on $v$, by Proposition \ref{prop:  E[Iv]}).
By our choice of $k$, since $\Delta_I(x) = o(1)$ and $\Delta_I^*(x) = o(1)$,
$\Pr \br{ x \in S } \geq (1-o(1)) \cdot p_x$ and $\Pr \br{ x \in S} \leq (1+o(1)) \cdot p_x$.
Thus,
% $$ 1-o(1) \leq \frac{ \E \br{ |S|} }{ \sum_{x \in G} p_x } \leq 1+o(1) . $$
$$ \E \br{ |S|} \geq (1-o(1)) \cdot \sum_{x \in G} p_x  . $$
Furthermore, note that for $x \neq y \in G$, since $\Delta_J(x,y) = o(1)$,
\begin{eqnarray*}
\Pr \br{ x,y \in S } & \leq & (1+o(1)) \cdot
\prod_{(v,z) \in V(x,y) } \sr{1- \E \br{ J(v,z) } }
=  (1+o(1)) \cdot p_x p_y .
\end{eqnarray*}
Hence,
\begin{eqnarray*}
\E \br{ |S|^2} & = & \sum_{x \neq y \in G} \Pr \br{ x,y \in S } + \sum_{x \in G} \Pr \br{ x \in S} \\
& \leq & (1+o(1)) \cdot \sum_{x \neq y \in G} p_x p_y + (1+o(1)) \cdot \sum_{x \in G} p_x .
\end{eqnarray*}

Now we use the Paley-Zygmund inequality:
\begin{eqnarray*}
\Pr \br{ A B \cup B A \neq G } & = & \Pr \br{ |S| > 0 }
 \geq  \frac{ \sr{ \E \br{ |S|} }^2 }{ \E \br{ |S|^2} } \\
 & \geq & (1-o(1)) \cdot \frac{ (\sum_x p_x )^2}{ \sum_{x \neq y} p_x p_y + \sum_x p_x }
 = (1-o(1)) \cdot \sr{1 -  \frac{\sum_x p_x - \sum_x p_x^2}{ \sum_{x \neq y} p_x p_y + \sum_x p_x } } \\
& = & (1-o(1)) \cdot \sr{1 -  \frac{\sum_x p_x (1-p_x)}{ \sum_{x} p_x (1 - p_x + \sum_{y} p_y) } }
\end{eqnarray*}

So it suffices to show that for all $x \in G$,
$$ \frac{1-p_x}{1 - p_x + \sum_{y} p_y} \leq (1+o(1)) \cdot e^{ - \Theta \psi}  . $$
But this follows immediately from
$$ \frac{1-p_x}{1 - p_x + \sum_{y} p_y} \leq \sr{ \sum_{y} p_y }^{-1} , $$
and from the fact that
\begin{eqnarray*}
\sum_{y \in G} p_y & \geq & \sum_{y \in G} \exp
\sr{ - \frac{2 k^2}{n-2} \cdot \sr{ 1- \frac{\abs{C(y)} }{2n} } } \\
& = & \sum_{y \in G} \exp \sr{ - \frac{2 k^2}{n} \cdot \sr{ 1- \frac{\abs{C(y)} }{2n} } - \frac{4 k^2}{n(n-2)} \cdot
\sr{1- \frac{\abs{C(y)} }{2n} } } \\
& \geq & (1-o(1)) \cdot \exp \sr{ 2 \Theta \psi - 2 \Theta \log n }
\cdot \sum_{y \in G} \exp \sr{ \Theta \log n \frac{\abs{C(y)} }{n} }
\exp \sr{ - \Theta \psi \frac{\abs{C(y)} }{n}  } \\
& \geq & (1-o(1)) \cdot \exp \sr{ \Theta \psi } ,
\end{eqnarray*}
(where we have used the inequality $1-\frac{1}{\xi} \geq \exp \sr{ - \frac{1}{\xi-1} }$, valid
for any $\xi >1$).
\end{proof}

\section{Concluding Remarks and Open Problems}

\begin{itemize}

\item One can ask whether a result similar to Theorem \ref{thm:  main thm} holds if we only require
that $A B = G$.  It can be shown that for any finite group $G$ of order $|G|=n$,
if $k \geq (1+\eps)\sqrt{ n \log n }$ then $\Pr \br{ A B  = G } = 1-o(1)$,
and if $k \leq (1-\eps) \sqrt{ n \log n}$,
then $\Pr \br{ A B  = G } = o(1)$. The proof of this is almost identical to the proof of Theorem
\ref{thm:  main thm}.

\item Another variant is to take $A$ and $B$ of different sizes.  That is, let $a_1,\ldots,a_k$ and $b_1,\ldots,b_m$ be
$k+m$ random elements of $G$, and let $A = \set{a_1,\ldots,a_k}$ and $B = \set{b_1,\ldots,b_m}$.  What can be said about the
probability $\Pr \br{ AB \cup BA = G }$?  It turns out that if $k$ and $m$ are both not too large, then the threshold is identical
to the case $m=k$.  That is, provided that $\max \set{k,m} = o \sr{ \frac{|G|}{\log |G|} }$, we can prove that
$\Pr \br{ A B \cup BA  = G } = 1-o(1)$ if $k \cdot m \geq (1+\eps) \Theta(G) n \log n$ and if $k \cdot m
\leq (1-\eps) \Theta(G) n \log n$, then $\Pr \br{ A B  \cup BA = G } = o(1)$.
Again, the proof is the same as that of Theorem \ref{thm:  main thm}.

\item We can also ask what is the probability of the event $A A  = G$.  In this case, our method breaks down for groups $G$
such that $\Theta(G)$ is very small.  That is, we can prove a phase transition in $k$ for the event $\set{ A A = G}$,
but only for families of groups $\set{G_n}$, such that $\Theta(G_n) \geq \frac{1}{2} + \frac{\log \log |G_n|}{\log |G_n|}$.
Note that in Section \ref{scn:  group invariant} it is shown that there are groups (e.g. the symmetric group) that
do not have this property.  The main problem in dealing with $A A$, is that one needs to control the size of the set
$\set{ a^2 \ : \ a \in A }$.  This means controlling the probability $\Pr \br{ a_i^2 = x }$ for all $x$.  Thus,
we have the following

\paragraph{Open Problem.}  Prove or provide a counter-example:

Let $G_n$ be a family of groups such that
$$ \lim_{n \to \infty} |G_n| = \infty . $$
For all $n$, let $a_1, a_2, \ldots,a_k$ be $k$ randomly chosen elements of $G_n$,
and let $A = \set{a_1, a_2, \ldots, a_k}$.
Let $P'(n,k) = \Pr \br{ A A = G_n }$.

For all $n$, let $C_n = \sqrt{ 2 \Theta(G_n) |G_n| \log |G_n| }$.
Then for any $\eps > 0$,
$$ \lim_{n \to \infty} P'(n, \lceil (1+\eps) C_n \rceil ) = 1 \ , \quad
\lim_{n \to \infty} P'(n, \lfloor (1-\eps) C_n \rfloor ) = 0 . $$

\item Another interesting problem, is to determine what happens inside the transition window:  As can be seen
by Lemmas \ref{lem:  first moment} and \ref{lem:  second moment}, if
$\psi(n)$ is any function tending to infinity with $n$, then
for $k \geq \sqrt{\Theta(G) n \log n} + \sqrt{n \psi(n)}$, with high probability $AB \cup BA = G$.  For
$k \leq \sqrt{\Theta(G) n \log n} - \sqrt{n \psi(n)}$, with high probability $AB \cup BA \neq G$.

The question is, what happens for $\sqrt{\Theta(G) n \log n} - \sqrt{n} < k < \sqrt{\Theta(G) n \log n}
+ \sqrt{n}$?  What can be said about the size of $AB \cup BA$ in this case?

\item Here are some further open questions, proposed by Itai Benjamini:

Let $G$ be a finite group.  Consider the family of subsets
$$ \Ss = \Set{ B \subset G }{ \exists \ A \subset G \ : \ A A = B } . $$

\begin{enumerate}
\item Determine the size of $\Ss$.
\item Sample $B \in \Ss$ from the uniform distribution.
\item Devise an (efficient) algorithm to decide whether a subset $A \subset G$
is in $\Ss$ or not.
\item Devise an (efficient) algorithm to decide whether $A \subset G$ is ``almost''
an element of $\Ss$; i.e. whether there exists $B \in \Ss$ such that $\abs{A \triangle B} = o(|G|)$.
\end{enumerate}

It will be interesting to solve some of these problems even with relaxed conditions, such as
assuming that $G$ is abelian or even cyclic.

\end{itemize}

\paragraph{Acknowledgements.} I wish to thank Itai Benjamini for suggesting
this problem and useful discussions. I thank Nir Avni for his comments on a preliminary
version of this paper, his ideas regarding the group invariant $\Theta$, and for bringing
the papers by Liebeck and Shalev to my attention.  I thank Gady Kozma for useful
discussions and his insights into simplifying some of the proofs.

% -----------------------------------------------------------------------------------
% ---------------------------- BIBLIOGRAPHY -----------------------------------------
% -----------------------------------------------------------------------------------

\end{document}